\newcommand{\bZ}{\mathbb{Z}}
\newcommand{\Z}{\mathbb{Z}}
\DeclareMathOperator{\Aut}{Aut}
\DeclareMathOperator{\Circ}{Circ}
\newcommand{\Sym}{{\rm Sym}}
\theoremstyle{plain}
\newtheorem{Theorem}{Theorem}
\newtheorem{Corollary}[Theorem]{Corollary}
\newtheorem{Proposition}[Theorem]{Proposition}
\newtheorem{Lemma}[Theorem]{Lemma}
\theoremstyle{remark}
\newtheorem*{Remarks}{Remarks}
\theoremstyle{definition}
\begin{document}

\title{$GI$-graphs  and their groups\\[+12pt] }

\author{
Marston D.E. Conder 
\\[+3pt]
{\normalsize Department of Mathematics, University of Auckland,}\\
{\normalsize Private Bag 92019, Auckland 1142, New Zealand} \\[+3pt]
{\normalsize m.conder@auckland.ac.nz}\\[+6pt]
\and
Toma\v{z} Pisanski 
\\[+3pt]
{\normalsize Faculty of Mathematics and Physics, University of Ljubljana,} \\
{\normalsize Jadranska 19, 1000 Ljubljana, Slovenia} \\[+3pt]
{\normalsize Tomaz.Pisanski@fmf.uni-lj.si}\\[+6pt] 
\and 
Arjana \v{Z}itnik
\\[+3pt]
{\normalsize Faculty of Mathematics and Physics, University of Ljubljana,} \\
{\normalsize Jadranska 19, 1000 Ljubljana, Slovenia} \\[+3pt]
{\normalsize Arjana.Zitnik@fmf.uni-lj.si} 
}

%\Keyword{generalized  Petersen graph, $I$-graph, automorphism group}

\date{10 July 2012}

\maketitle

\begin{abstract}
The class of generalized Petersen graphs was introduced by Coxeter in the 1950s.
Frucht, Graver and Watkins determined the automorphism groups of 
generalized Petersen graphs in 1971, and much later, Nedela and \v Skoviera 
and (independently) Lovre\v ci\v c-Sara\v zin characterised those which are  
Cayley graphs.
In this paper we extend the class of generalized Petersen graphs to a class 
of {\em $GI$-graphs}.  
For any positive integer $n$ and any sequence $j_0,j_1,....,j_{t-1}$ of integers mod $n$, 
the $GI$-graph $GI(n;j_0,j_1,....,j_{t-1})$ is a $(t\!+\!1)$-valent graph
on the vertex set $\bZ_t \times \bZ_n$, with edges of two kinds: 
\begin{itemize}
\item an edge from $(s,v)$ to $(s',v)$, for all distinct $s,s' \in \bZ_{t}$  
          and all $v \in \bZ_n$,
\item edges from $(s,v)$ to $(s,v + j_s)$ and  $(s,v - j_s)$, for all $s \in \bZ_{t}$ and $v \in \bZ_n$.
\end{itemize}

\noindent 
By classifying different kinds of automorphisms, 
we  describe the automorphism group of each $GI$-graph, 
and determine which $GI$-graphs are vertex-transitive and which are Cayley graphs.
A $GI$-graph can be edge-transitive only when $t \leq 3$, 
or equivalently, for valence at most $4$. 
We present a unit-distance drawing
of a remarkable $GI(7;1,2,3)$.
\medskip

\noindent
{\bf Keywords}: $GI$-graph, generalized Petersen graph, 
                vertex-transitive graph, edge-transitive graph, circulant graph,
                automorphism group, wreath product, unit-distance graph.\\

\noindent
{\bf Mathematics Subject Classification (2010)}:
20B25,  % finite automorphism groups of algebraic, geometric, or combinatorial structures
05E18,  % Group actions on combinatorial structures
05C75.   % Structural characterization of families of graphs 
% http://www.ams.org/msc/index.html#search

\end{abstract}

%%%%%%%%%%%%%%%%%%%%%%%%%%%
\section{Introduction}
\label{intro}
%%%%%%%%%%%%%%%%%%%%%%%%%%%

Trivalent graphs (also known as cubic graphs) form an extensively studied class of graphs. 
Among them, the Petersen graph is one of the most important 
finite graphs, constructible in many ways, and is a minimal counter-example for many 
conjectures in graph theory. 
The Petersen graph is the initial member of a family of graphs $G(n,k)$,
known today as {\em Generalized Petersen graphs}, which have similar constructions. 
Generalized Petersen graphs were first introduced by Coxeter \cite{Coxeter} in 1950, 
and were named in 1969 by Watkins \cite{Watkins}. 

A standard visualization of a generalized Petersen graph consists of 
two types of vertices: half of them belong to an outer rim, and the other half belong 
to an inner rim; and there are three types of edges: those in the outer rim, those in 
the inner rim, and the `spokes', which form a $1$-factor between the inner rim and the outer rim. 
The outer rim is always a cycle, while the inner rim may consist of several isomorphic cycles. 
A generalized Petersen graph $G(n,k)$ is given by two parameters $n$ and $k$, 
where $n$ is the number of vertices in each rim, and $k$ is the `span' of the inner rim 
(which is the distance on the outer rim between the neighbours of two adjacent vertices 
on the inner rim). 

The family $G(n,k)$ contains some very important graphs. 
Among others of particular interest are the $n$-prism $G(n,1)$, the D\"{u}rer graph $G(6,2)$, 
the M\"{o}bius-Kantor graph $G(8,3)$, the dodecahedron $G(10,2)$, 
the Desargues graph $G(10,3)$, the Nauru graph $G(12,5)$, and of course the Petersen graph 
itself, which is $G(5,2)$.

Generalized Petersen graphs possess a number of interesting properties.
For example, $G(n,k)$ is vertex-transitive if and only if either 
$n = 10$ and $k = 2$, or $k^2 \equiv \pm 1$ mod $n\,$ \cite{Frucht},  
and a Cayley graph if and only if $k^2 \equiv 1$ mod $n\,$ \cite{Lovrecic1, NedelaSkoviera}, 
and arc-transitive only in the following seven cases:
$(n,k)  = (4,1)$, $(5,2)$, $(8,3)$, $(10, 2)$, $(10, 3)$, $(12, 5)$ or  $(24, 5)\,$ \cite{Frucht}. 

If we want to maintain the symmetry between the two rims, then another parameter has to 
be introduced, allowing the span on the outer rim to be different from 1. This gives the 
definition of an {\em $I$-graph}.

The family of $I$-graphs  was introduced in 1988 in the Foster Census \cite{Foster}. 
%and form a natural generalization of generalized Petersen graphs. 
%An I-graph is described by three integer parameters. 
For some time this family failed to attract the attention of many researchers, 
possibly due to the fact that among all $I$-graphs, the only ones that are vertex-transitive 
are the generalized Petersen graphs \cite{igraphs,LovrecicMarusic}. 
Still,  necessary and sufficient conditions for testing whether or not two $I$-graphs are 
isomorphic were determined in \cite{igraphs,HPZ2}, and these were used to enumerate 
all $I$-graphs in \cite{Petkovsek}.
Also in \cite{HPZ2} it was shown that all generalized Petersen graphs are 
unit-distance graphs, by representing them as isomorphic $I$-graphs.
Furthermore, in \cite{igraphs} it was shown that automorphism group of a connected 
$I$-graph $I(n,j,k)$ that  is not a generalized Petersen graph is either dihedral or 
a group with presentation
\begin{equation*}                                %\label{eq:Gammavar}
  \Gamma = \langle\, \rho, \tau, \varphi \mid
    \rho^n = \tau^2 = \varphi^2 = 1, \, 
    \rho\tau\rho = \tau, \, 
    \varphi\tau\varphi = \tau, \,
    \varphi\rho\varphi = \rho^a \, \rangle 
\end{equation*}
for some $a \in \bZ_n$, 
%where $b \equiv -a \pmod{n}$ and $a$ is the inverse in $\bZ_n$ of the
%element $sk-tj$ such that $sk+tj \equiv 1 \pmod{n}$.
%Moreover, it was shown 
and that among all $I$-graphs, only the 
generalized Petersen graphs can be vertex-transitive or edge-transitive.

% We now further 
In this paper we further
generalize both of these families of graphs, and call them
\emph{generalized I-graphs}, or simply {\em $GI$-graphs}. 
%We investigate their basic properties and their automorphism groups. 
%As mentioned earlier, the automorphism
%group of an $I$-graph is usually dihedral, but there can be 
%some unexpected automorphisms. 
%
%In this paper 
We determine the group of automorphisms of any $GI$-graph. Moreover,
we completely characterize the edge-transitive, vertex-transitive and Cayley graphs,
among the class of $GI$-graphs.

At the end of the paper we briefly discuss the problem of unit-distance realizations of $GI$-graphs.
This problem has been solved for $I$-graphs in \cite{HPZ1}. 
We found a remarkable new example of a 4-valent unit-distance graph, 
namely $GI(7;1,2,3)$, which is a Cayley graph on 21 vertices for the group $\bZ_7 \rtimes \bZ_3$. 
%We are interested in finding out
%how this phenomenon carries over to $GI$-graphs.  We are also 
%interested in finding any new vertex- or edge-transitive graphs among the 
%family of $GI$-graphs.

Let us note that ours is not the only possible generalization.
For instance, see \cite{Lovrecic} for another approach, 
which is not much different from ours.
%One could summarize the difference by saying 
The basic difference is that our approach 
uses complete graphs, while the approach by Lovre\v ci\v c-Sara\v zin, 
Pacco and Previtali in \cite{Lovrecic} uses cycles; 
their construction coincides with ours for $t \le 3$, but not for larger $t$.  

We acknowledge the use of {\sc Magma} \cite{Magma} in constructing and analysing 
examples of $GI$-graphs, and helping us to see patterns and test conjectures 
that led to many of the observations made and proved in this paper.

%%%%%%%%%%%%%%%%%%%%%%%%%%%
\section{Definition of $GI$-graphs and their properties}
\label{defn-props}
%%%%%%%%%%%%%%%%%%%%%%%%%%%

%Let $n \ge 3$, $t \ge 1$ and $1 \le j_k \le n-1$, $j_k \ne n/2$ for $0 \le k \le t-1$.
For positive integers $n$ and $t$ with $n \ge 3$, 
let $(j_0,j_1, \dots , j_{t-1})$ be any sequence of integers 
such that $0 <  j_k < n$ and $j_k \ne n/2$, for $0 \le k < t$. \\[-8pt] 

Then we define $GI(n;j_0,j_1, \dots , j_{t-1})$ to be the graph 
with vertex set $\bZ_t \times \bZ_n$, and with edges of two types: 
\\[+4pt] 
\begin{tabular}{ll}
(a)\hskip -7pt ${}$ & an edge from $(s,v)$ to $(s',v)$, for all distinct $s,s' \in \bZ_{t}$ and all $v \in \bZ_n$, \\[+2pt] 
(b)\hskip -7pt ${}$ & edges from $(s,v)$ to $(s,v + j_s)$ and  $(s,v - j_s)$, for all $s\in \bZ_{t}$ and all $v \in \bZ_n$. \\[+4pt] 
\end{tabular}

This definition gives us an infinite family of graphs, which we call  \emph{GI-graphs}.

The graph $GI(n;j_0,j_1, \dots , j_{t-1})$ has $nt$ vertices, and is regular of valence $(t-1)+2 = t+1.$
Edges of type (a) are called the \emph{spoke edges}, 
while those of type (b) are called the \emph{layer edges}.
%in which a vertex $(s,v) \in \bZ_t \times \bZ_n$ is adjacent to 
%$(s',v)$ for any $s' \in \bZ_t, s' \ne s$ and to $(s,v + j_s), (s,v-j_s)$. 
%
Also for each $s \in \bZ_t$ the set $L_s = \{(s,v) : v \in \bZ_n\}$ 
is called a {\em layer}, and 
for each $v \in \bZ_n$ the set $S_v = \{(s,v) : s \in \bZ_t\}$ 
is called a {\em spoke}.
%Given $v \in \bZ_n$, the induced subgraph of $GI(n;j_0,j_1, \dots , j_{t-1})$ 
%on the vertex set $\{(s,v) : s \in \bZ_{t}\}$ is called a \emph{spoke}.
%Given $s \in \bZ_t$, the induced subgraph of $GI(n;j_0,j_1, \dots , j_{t-1})$ 
%on the vertex set $\{(s,v) : v \in \bZ_{n}\}$ is called a \emph{layer};
%if $\gcd(n,j_s)=d$ then the corresponding layer consists of $d$ cycles 
%with length $n/d$.
We observe that the induced subgraph %of $GI(n;j_0,j_1, \dots , j_{t-1})$ 
on each spoke is a complete graph $K_t$ of order $t$.  
On the other hand, the induced subgraph on the layer $L_s$ is a union of $d$ cycles 
of  length $n/d_s$, where $d_s =\gcd(n,j_s)$. \\[-8pt] 

In the case  $t=1$, the graph $GI(n;j_0)$ is simply a union 
of disjoint isomorphic cycles of length $n/\gcd(n,j_0)$. 
In the case $ t = 2$, we have $I$-graphs; for example, 
$GI(n;1,j)$ is a generalized Petersen graph, for every $j$, 
and in particular, $GI(5;1,2)$ is the dodecahedral graph 
(the $1$-skeleton of a dodecahedron). 
Some other examples are illustrated in Figure~\ref{fig:examplesGIgraphs}.

\begin{figure}[htb]
\centering
\begin{minipage}{0.3\textwidth}
  \centering
  \includegraphics[width=0.9\textwidth]{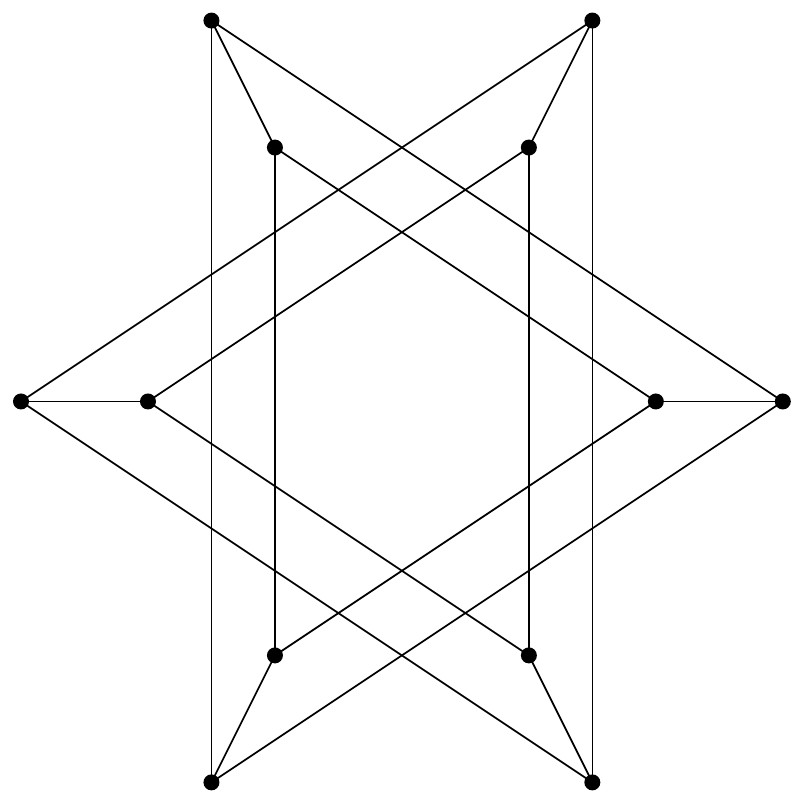}\\ (a)
\end{minipage}
\hfill
\begin{minipage}{0.3\textwidth}
  \centering
  \includegraphics[width=0.9\textwidth]{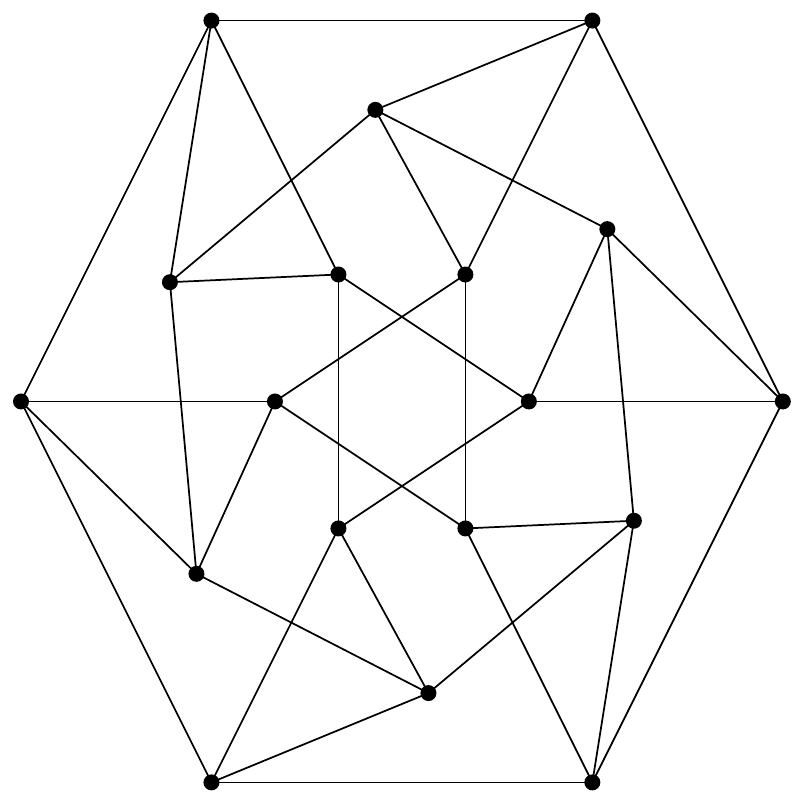}\\ (b)
\end{minipage}
\hfill
\begin{minipage}{0.3\textwidth}
  \centering
  \includegraphics[width=0.9\textwidth]{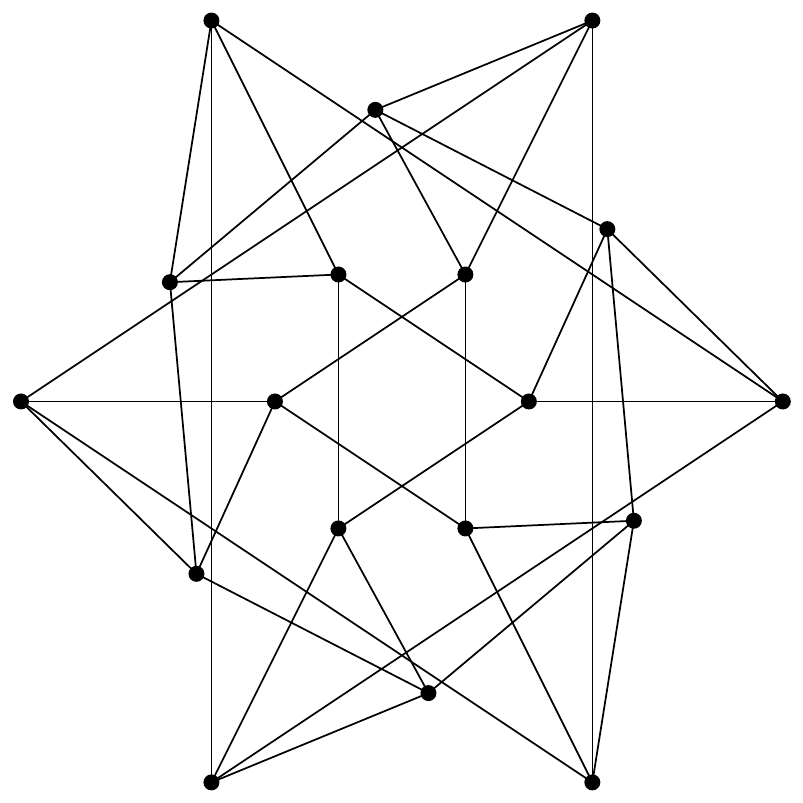}\\ (c)
\end{minipage}
\caption{$GI$-graphs $GI(6;2,2)$, $GI(6;1,1,2)$, and $GI(6;2,1,2)$.} 
\label{fig:examplesGIgraphs}
\end{figure}

Note that taking  $j_k^\prime = \pm j_k$ for all $k$ gives a $GI$-graph 
$GI(n;j_0^\prime,j_1^\prime, \dots , j_{t-1}^\prime)$
that is exactly the same as 
$GI(n;j_0,j_1, \dots , j_{t-1})$. 
Similarly, any permutation of
$j_0, j_1, \dots, j_{t-1}$ gives a $GI$-graph isomorphic to 
$GI(n;j_0,j_1, \dots , j_{t-1})$.
Therefore we will usually assume that $0 < j_k < n/2$ for all $k$, 
and that $j_0 \le j_1 \le \dots \le j_{t-1}$. 
In this case, we say that the $GI$-graph
$GI(n;j_0,j_1, \dots , j_{t-1})$ is in \emph{standard form}. 

\smallskip
The following theorem gives a partial answer to the problem of 
distinguishing between two $GI$-graphs. 

%%%%%%%%%%
\begin{Proposition}  
\label{thm:multiply}
%%%%%%%%%%
%
%Let $n,j_0,j_1,\dots,j_{t-1},$ and $a$ be positive integers such that 
Suppose $j_0,j_1,\dots,j_{t-1} \ne 0$ or $n/2 $ modulo $n$, and 
$a$ is a unit in $\bZ_n$. 
%$\gcd(n,j_0, j_1, \dots j_{t-1})=1$ and 
%$\gcd(n,a)=1$. 
Then the  graph $GI(n;aj_0, aj_1, \dots, aj_{t-1})$ is isomorphic to 
the graph $GI(n;j_0,j_1, \dots, j_{t-1})$.
\end{Proposition}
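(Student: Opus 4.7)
The plan is to exhibit an explicit bijection between the vertex sets that carries edges to edges. Since both graphs share the vertex set $\bZ_t \times \bZ_n$, the natural candidate is to act on the second coordinate by multiplication by $a$.

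First I would define $\varphi : \bZ_t \times \bZ_n \to \bZ_t \times \bZ_n$ by $\varphi(s,v) = (s, av)$. Because $a$ is a unit in $\bZ_n$, multiplication by $a$ is a bijection of $\bZ_n$, so $\varphi$ is a bijection of the vertex set. The goal is then to verify that $\varphi$ sends edges of $GI(n;j_0,\dots,j_{t-1})$ to edges of $GI(n;aj_0,\dots,aj_{t-1})$.

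Next I would check edge-preservation on each of the two edge types separately. For a spoke edge $\{(s,v),(s',v)\}$ with $s \neq s'$, we have $\varphi$-image $\{(s,av),(s',av)\}$, which is a spoke edge in the target graph since spoke edges don't depend on the parameters $j_k$. For a layer edge $\{(s,v),(s,v+j_s)\}$, the image is $\{(s,av),(s,av+aj_s)\}$; writing $w = av$, this is $\{(s,w),(s,w+aj_s)\}$, which is precisely a layer edge in $GI(n;aj_0,\dots,aj_{t-1})$ at layer $s$. Since $a$ is invertible, the same argument (using $a^{-1}$) shows that every edge of the target graph is hit, so $\varphi$ is an isomorphism. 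I should also briefly note that the hypothesis $j_s \neq 0, n/2$ is inherited by $aj_s$ (as $a$ is a unit, so $aj_s = 0$ or $n/2$ would force $j_s = 0$ or $n/2$), ensuring the target graph is a legitimate $GI$-graph.

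There is essentially no obstacle here; the whole argument is a short verification. The only mild subtlety is bookkeeping that edge type (b) of the source graph matches edge type (b) of the target graph under the correct reindexing by $a$, which is immediate from distributivity of multiplication by $a$ over addition in $\bZ_n$.
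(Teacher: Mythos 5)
Your proof is correct and is essentially the same as the paper's: both use the bijection that multiplies the second coordinate by $a$ (the paper writes the inverse map, sending $(s,av)\mapsto(s,v)$, but this is the same isomorphism viewed in the opposite direction), and both verify edge-preservation by distributivity. No further comment is needed.
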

\begin{proof}
Since $a$ is coprime to $n$, 
the numbers $av$ for $0 \le v < n$, are all distinct in $\bZ_n$, 
and so we can label the vertices of $GI(n;aj_0, aj_1, \dots, aj_{t-1})$ 
as ordered pairs $(s,av)$ for $s \in \bZ_t$ and $v \in \bZ_n$. 
Now define a mapping
$\varphi : V(GI(n;aj_0,aj_1, \dots, aj_{t-1})) \to V(GI(n;j_0,j_1, \dots, j_{t-1}))$ 
by setting $ \varphi((s,av) ) = (s,v)$  for all $s \in \bZ_t$ and all $v \in \bZ_n$. 
This is clearly a bijection, and since a vertex $(s,av)$ in $GI(n;aj_0, aj_1, \dots, aj_{t-1})$ 
is adjacent to $(s',av)$ for each $s' \in \bZ_t \setminus \{s\}$ 
and to $(s,av \pm aj_s) = (s,a(v \pm j_s))$, it is easy to see that 
$\varphi$ is also a graph homomorphism.
\end{proof}
\medskip

We may collect the parameters $j_s$ into a multiset $J$, and then use the abbreviation 
$GI(n; J)$ for the graph $GI(n;j_0,j_1, \dots, j_{t-1})$. 
Also we will say that the multiset $J$ is in \emph{canonical form} if it is lexicographically first
among all the multisets that give isomorphic copies of $GI(n; J)$ via Proposition~\ref{thm:multiply}. 

We now list some other properties of $GI$-graphs. 
Proposition~\ref{thm:spokes} shows that the spoke edges are easy to recognise when $t > 3$. 

%%%%%%%%%%
\begin{Proposition}
\label{thm:factors}
%%%%%%%%%%
The graph $GI(n;j_0,j_1, \dots , j_{t-1})$ admits a factorization into 
a $(t-1)$-factor $nK_t$ and a $2$-factor $($namely the spokes and the layers$)$.
\end{Proposition}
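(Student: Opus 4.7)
The plan is to read off the factorization directly from the definition of $GI(n;j_0,j_1,\dots,j_{t-1})$, which already partitions its edges into two named classes: the spoke edges of type (a) and the layer edges of type (b). These two classes are disjoint by inspection, because type (a) edges join vertices $(s,v)$ and $(s',v)$ lying in \emph{different} layers, while every type (b) edge stays inside a single layer $L_s$. So it suffices to identify each class with the two claimed factors.

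First I would check that the spoke edges form a $(t-1)$-factor isomorphic to $nK_t$. For each fixed $v \in \bZ_n$ the set $S_v = \{(s,v) : s \in \bZ_t\}$ is, by type (a), a complete graph $K_t$. The $n$ spokes $S_v$ partition $V = \bZ_t \times \bZ_n$, and every type (a) edge lies inside exactly one spoke, so the spoke-subgraph is vertex-disjoint union of these $n$ copies of $K_t$, i.e., $nK_t$. In particular it is $(t-1)$-regular.

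Next I would show the layer edges form a $2$-factor. By type (b), the layer neighbours of $(s,v)$ are precisely $(s,v+j_s)$ and $(s,v-j_s)$. The hypotheses $0 < j_s < n$ and $j_s \ne n/2$ guarantee $j_s \not\equiv 0$ and $j_s \not\equiv -j_s \pmod{n}$, so these two neighbours are distinct from $(s,v)$ and from each other. Hence every vertex has layer-degree exactly $2$, so the layer-subgraph is a $2$-regular spanning subgraph, i.e., a $2$-factor. The two factors then recover all $(t-1)+2 = t+1$ edges at each vertex, which matches the valence of $GI(n;j_0,\dots,j_{t-1})$.

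There is no real obstacle: the entire content of the proposition is a bookkeeping check that the two edge-types in the definition really deliver the stated regularity and the $K_t$ structure on each spoke. The only place any hypothesis is used is in ruling out loops and doubled layer edges, which is exactly what the standing conditions $0 < j_s < n$ and $j_s \ne n/2$ were imposed to ensure.
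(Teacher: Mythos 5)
Your proof is correct, and it matches the paper exactly in spirit: the paper states this proposition with no proof at all, treating it as immediate from the definition, and your write-up simply supplies the routine verification (spoke edges give $nK_t$, and the conditions $0<j_s<n$, $j_s\ne n/2$ ensure each vertex has layer-degree exactly $2$).
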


%%%%%%%%%%
\begin{Proposition} 
\label{thm:spokes}
%%%%%%%%%%
An edge of a $GI$-graph with $4$ or more layers is a spoke-edge 
if and only if it belongs to some clique of size $4$.
\end{Proposition}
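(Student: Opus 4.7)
The forward direction is immediate: if $e$ is a spoke-edge, then both its endpoints lie in some spoke $S_v$, which induces a complete graph $K_t$ of order $t\ge 4$. Any four vertices of $S_v$ containing $e$ form a $K_4$ through $e$.

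For the converse, I would argue by contradiction. Suppose $e=\{(s,v),(s,v+j_s)\}$ is a layer edge lying in a $K_4$ with additional vertices $u_3,u_4$. Each of $u_3,u_4$ is a common neighbour of $(s,v)$ and $(s,v+j_s)$. The plan is to classify such common neighbours and show there are at most two candidates, which in fact coincide, ruling out $K_4$.

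The key observation is that edges between distinct layers are exclusively spoke-edges, and spoke-edges preserve the $\bZ_n$-coordinate. Hence any vertex $u$ in a layer $L_{s'}$ with $s'\ne s$ that is adjacent to both $(s,v)$ and $(s,v+j_s)$ would need to satisfy $u=(s',v)=(s',v+j_s)$, forcing $j_s\equiv 0\pmod n$, which is excluded. So $u_3$ and $u_4$ must both lie in $L_s$. Within $L_s$, the neighbours of $(s,v)$ are $(s,v\pm j_s)$ and those of $(s,v+j_s)$ are $(s,v)$ and $(s,v+2j_s)$. Subtracting the endpoints of $e$, the common neighbours must lie in $\{(s,v-j_s)\}\cap\{(s,v+2j_s)\}$, which is nonempty only when $3j_s\equiv 0\pmod n$, and then consists of a single vertex. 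Thus there is at most one common neighbour, so $u_3=u_4$, contradicting the existence of a $K_4$.

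The main (and only) obstacle is the careful bookkeeping of the common-neighbour analysis within a single layer, including the degenerate case $n\equiv 0\pmod 3$ with $j_s=n/3$; in that case a layer triangle exists but cannot be extended to a $K_4$ because any further vertex would require either a fourth common neighbour in $L_s$ (unavailable) or, by the previous paragraph, an impossible spoke-coincidence outside $L_s$. Once this is noted, the proof is a short case analysis using only the definition of the edge sets and the hypothesis $t\ge 4$ (the latter used solely to produce the $K_4$ inside $S_v$ in the forward direction).
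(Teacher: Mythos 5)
Your proof is correct and follows essentially the same route as the paper's: the forward direction uses the $K_t$ induced on a spoke with $t\ge 4$, and the converse rules out a $K_4$ through a layer edge by noting that common neighbours outside the layer would force $j_s\equiv 0 \pmod n$ and that the induced subgraph on a layer is a union of cycles, hence contains no $K_4$. Your version merely spells out the common-neighbour bookkeeping (including the $3j_s\equiv 0$ triangle case) that the paper leaves implicit.
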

\begin{proof}
No edge between two vertices in the same layer can lie in a $K_4$ subgraph,  
because the subgraph induced on each layer is a union of cycles, 
and no two spokes between two different layers can have a common vertex. 
\end{proof} 

%Next:

%%%%%%%%%%%%
\begin{Proposition}
\label{thm:connectedGI}
%%%%%%%%%%%%
Let $d=\gcd(n,j_0,j_1,\dots, j_{t-1})$. Then the graph
$GI(n;j_0,j_1, \dots , j_{t-1})$ is a disjoint union of $d$ copies of
$GI(n/d;j_0/d,j_1/d, \dots , j_{t-1}/d)$. In particular, the 
graph $GI(n;j_0,j_1, \dots , j_{t-1})$ is connected if and only if $d = 1$. 
\end{Proposition}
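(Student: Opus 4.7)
The plan is to exploit the subgroup structure of $\bZ_n$. Set $H = \langle j_0, j_1, \ldots, j_{t-1}\rangle \le \bZ_n$, which is the cyclic subgroup $d\bZ_n$ of order $n/d$. The second coordinate of a vertex interacts with the two edge types in a very controlled way: a spoke edge leaves the second coordinate unchanged, while a layer edge in layer $s$ shifts it by $\pm j_s \in H$. Consequently, the coset $v + H$ of the second coordinate is constant on each connected component, and the whole graph splits as the disjoint union of the $[\bZ_n : H] = d$ induced subgraphs on the vertex sets $\bZ_t \times (c + H)$, one for each coset $c + H$ of $H$ in $\bZ_n$.

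To identify each piece with a smaller $GI$-graph, I would fix a coset $c + H = \{c + dv : v \in \bZ_{n/d}\}$ and define the map $\varphi_c \colon (s, c + dv) \mapsto (s, v)$ from $\bZ_t \times (c+H)$ onto $\bZ_t \times \bZ_{n/d}$. This bijection carries each spoke edge to a spoke edge, and it sends a layer edge from $(s, c+dv)$ to $(s, c + dv \pm j_s)$ to the edge from $(s, v)$ to $(s, v \pm j_s/d)$. Hence $\varphi_c$ is a graph isomorphism onto $GI(n/d; j_0/d, \ldots, j_{t-1}/d)$. The defining inequalities $0 < j_s < n$ and $j_s \ne n/2$ pass through division by $d$ to give $0 < j_s/d < n/d$ and $j_s/d \ne (n/d)/2$, so the target graph satisfies the standing hypotheses on $GI$-graphs.

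For the connectivity statement, the decomposition above already shows that if $d > 1$ then the graph has $d \ge 2$ components and is disconnected. Conversely, when $d = 1$ we have $H = \bZ_n$, and it suffices to show that every vertex is reachable from $(0,0)$. Alternating walks along layer edges within layer $s$ (which shift the second coordinate by arbitrary integer multiples of $j_s$) with spoke crossings (which change the layer at no cost to the second coordinate) reach every vertex of the form $(s,\, k_0 j_0 + k_1 j_1 + \cdots + k_{t-1} j_{t-1})$ with $s \in \bZ_t$ and $k_i \in \bZ$. Since $\gcd(n, j_0, \ldots, j_{t-1}) = 1$, the integers $j_0, \ldots, j_{t-1}$ generate $\bZ_n$, so this exhausts every possible second coordinate, and a final spoke step reaches every first coordinate.

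I do not anticipate any real obstacle here: the whole proposition rests on the two routine verifications above, namely that $\varphi_c$ preserves adjacency and that the divided parameters still satisfy the $GI$-graph conditions. The only point requiring a little attention is checking that no spurious edges arise or are missed under $\varphi_c$, which is immediate from the fact that both edge types in the definition depend only on $s$ and the shift $\pm j_s$ modulo $n$.
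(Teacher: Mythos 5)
Your proof is correct and follows essentially the same route as the paper: both arguments rest on the observation that spoke edges fix the second coordinate while layer edges shift it by $\pm j_s$, so that components are governed by the subgroup $\langle j_0,\dots,j_{t-1}\rangle = d\bZ_n$ (the paper phrases this via Bezout's identity, you via cosets, which is the same thing). The only difference is that you write out the isomorphism $\varphi_c$ explicitly where the paper declares it ``easy to see''.
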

\begin{proof}
First observe that the edges of every spoke make up a clique (of order $t$), 
so the graph is connected if and only if every two spokes are connected via 
the layer edges.  Now there exists an edge between two spokes $S_u$ and $S_v$ 
whenever $v-u$ is a multiple of $j_s$ for some $s$, and hence a path of 
length $2$ between $S_u$ and $S_v$ whenever $v-u$ is a $\bZ_n$-linear 
combination of some $j_s$ and $j_{s'}$, and so on.  Thus $S_u$ and $S_v$ lie 
in the same connected component  of the graph if and only if $v-u$ is expressible 
(mod $n$) as a $\bZ$-linear combination of $j_0,j_1,\dots, j_{t-1}$, 
say $v-u = cn + c_{0}j_{0} + c_{1}j_{1} + \dots + c_{t-1} j_{t-1}$ 
for some  $c_0,c_1,\dots, c_{t-1} \in \bZ$. By Bezout's identity, this occurs if and 
only if $v-u$ is a multiple of $\gcd(n,j_0,j_1,\dots, j_{t-1}) = d$.  
It follows that the graph has $d$ components, each containing a set of spokes 
$S_v$ with $v = u+jd$ for fixed $u$ and variable $j$, that, is, with subscripts 
differing by multiples of $d$.  Finally, since 
$(v-u)/d = c(n/d)+ c_{0}(j_{0}/d) + c_{1}(j_{1}/d) + \dots + c_{t-1}(j_{t-1}/d)$, 
it is easy to see that each component is isomorphic to $GI(n/d;j_0/d,j_1/d, \dots , j_{t-1}/d)$.
\end{proof}
\medskip

Finally, note that the restriction of a $GI$-graph to any proper subset of its layers 
gives rise to another $GI$-graph. 
In particular, if $J$ and $K$ are multisets with $J \subseteq K$, 
then $GI(n;J)$ is an induced subgraph of $GI(n;K)$.

%%%%%%%%%%%%%%%%%%%%%%%%%%%
\section{Automorphisms of $GI$-graphs}
\label{automs}
%%%%%%%%%%%%%%%%%%%%%%%%%%%

In this section, we consider the possible automorphims of a $GI$-graph 
$X=GI(n;J)$, where $J =\{j_0,j_1, \dots , j_{t-1}\}$ is any multiset.
If $X$ is disconnected, then since all connected components of $X$ 
are isomorphic to each other (by Proposition \ref{thm:connectedGI}), 
we may simply reduce this to the consideration of automorphisms 
a connected component of $X$ (and then find the automorphism group 
using a theorem of Frucht \cite{Frucht0}, cf. \cite{Harary}).
Hence from now on, we will assume that $X$ is connected.

The set of edges of $X=GI(n;J)$ may be partitioned into spoke edges and layer edges, 
and we will call this partition of edges the \emph{fundamental edge-partition} of $X$.
We know that the graph induced on the spoke edges is a collection of complete graphs, 
and that the graph induced on the layer edges is a collection of cycles 
(with each cycle belonging to a single layer, but with a layer being composed 
of two or more cycles of the same length $n/\gcd(n,j_s)$ if the corresponding 
element $j_s$ of $J$ is not a unit mod $n$). 

We will say that an automorphism of $X$ \emph{respects the fundamental
edge-partition} if it takes spoke edges to spoke edges, and layer
edges to layer edges. 
Any automorphism of $X$ that does not respect the fundamental edge-partition 
(and so takes some layer edge to a spoke edge, and some spoke edge to a layer edge) 
will be called \emph{skew}. 

%%%%%%%%%%%%%
\begin{Theorem}
\label{theorem:skewautom}
%%%%%%%%%%%%%
Let $X$ be a connected $GI$-graph with $t$ layers, where $t \ge 2$. 
If $X$ has a skew automorphism, then either $\,t = 2$ and $X$ is isomorphic to one
of the seven special generalized Petersen graphs
$G(4,1)$, $G(5,2)$, $G(8,3)$, $G(10, 2)$, $G(10, 3)$, $G(12, 5)$ and $G(24, 5)$, 
or  $\,t = 3$ and $X$ is isomorphic to $GI(3;1,1,1)$. 
Moreover, each of these eight graphs is  arc-transitive 
$($and is therefore both vertex-transitive and edge-transitive$)$. 
\end{Theorem}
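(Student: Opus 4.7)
My plan is to split on the number of layers $t$. The case $t \geq 4$ disposes of itself immediately: by Proposition~\ref{thm:spokes} the spoke edges are characterized as exactly those lying in some $K_4$, so no automorphism can move a spoke edge to a layer edge. This leaves the two cases $t = 2$ and $t = 3$, which I would handle separately.

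For $t = 2$ (the $I$-graph case) I would rely on the automorphism-group description of non-generalized-Petersen $I$-graphs quoted from \cite{igraphs} in the introduction: in either admissible form (dihedral, or the presented group $\Gamma$) the listed generators preserve the fundamental edge-partition, so skew automorphisms can occur only when the $I$-graph is in fact a generalized Petersen graph. Among generalized Petersen graphs, Frucht's classification of the full automorphism group \cite{Frucht} shows that skew automorphisms exist precisely in the seven arc-transitive cases listed in the theorem.

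The main work is in the case $t = 3$, which I would handle by triangle-counting. A direct adjacency check shows that every spoke edge lies in exactly one triangle, namely the spoke $K_3$ itself (any common neighbour of two vertices in the same spoke has to be the third vertex of that spoke); and a layer edge $(s,v)$--$(s,v+j_s)$ lies in a triangle if and only if $3 j_s \equiv 0 \pmod n$, equivalently $j_s = n/3$, in which case the unique such triangle is the short $3$-cycle of $L_s$. Call such layers \emph{short}. A skew automorphism $\alpha$ sends some spoke triangle $T$ to a triangle $\alpha(T)$ containing a layer edge, so $\alpha(T)$ is a layer triangle sitting entirely inside some short layer $L_{s_0}$. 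Now a vertex in a short layer lies in two triangles (one spoke, one layer), whereas a vertex in a non-short layer lies in only one; since the three vertices of $T$ (one in each layer) all map under $\alpha$ into $L_{s_0}$, each of them must already lie in a short layer. This forces \emph{every} layer to be short, so $j_0 = j_1 = j_2 = n/3$, and Proposition~\ref{thm:connectedGI} applied to a connected $X$ collapses $n/3$ to $1$, yielding $X = GI(3;1,1,1)$.

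For the moreover clause, the seven generalized Petersen graphs are arc-transitive by Frucht \cite{Frucht}, and for $GI(3;1,1,1)$ the quickest route is to identify it with the $3 \times 3$ rook graph $K_3 \square K_3$ (equivalently, the Paley graph on nine vertices), whose automorphism group $S_3 \wr S_2$ of order $72$ is well-known to act transitively on arcs; the transposition of ``rows'' (layers) with ``columns'' (spokes) in that description is in fact a skew automorphism in our sense. The trickiest step in the plan is the triangle-counting argument for $t = 3$: one needs to convert the local invariant ``number of triangles through a vertex'' into a global constraint, tracking how a spoke triangle, which necessarily spans all three layers, can possibly land inside a single layer.
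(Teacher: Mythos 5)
Your proposal is correct and follows essentially the same route as the paper: dispose of $t\ge 4$ via the $K_4$/clique characterisation of spoke edges, defer $t=2$ to \cite{Frucht} and \cite{igraphs}, and for $t=3$ track the image of a spoke triangle into a layer with $j_s=n/3$ and then use the count of triangles through each vertex to force $j_0=j_1=j_2=n/3$, with connectedness giving $n=3$. Your explicit identification of $GI(3;1,1,1)$ with $K_3\,\square\,K_3$ is a nice concrete substitute for the paper's citation in the arc-transitivity claim, and is consistent with the order-$72$ automorphism group recorded later in the paper.
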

\begin{proof}
First, if $t > 3$ then no layer edge lies in a clique of size $t$, but every spoke edge does, 
and therefore no automorphism can map a spoke edge to a layer edge. Thus $t \le 3$. 

Next, suppose $t = 3$, and let $\varphi$ be an automorphism taking an edge $e$ of some 
spoke $S_v$ to an edge $e'$ of some layer $L_s$.  
Since every edge of a spoke lies in a triangle, namely the spoke itself, 
it follows that $\varphi$ must take the whole spoke $S_v = \{(0,v),(1,v),(2,v)\}$ containing 
$e$ to some triangle containing the layer edge $e'$, and then the other two edges of the 
triangle $\{\varphi(0,v),\varphi(1,v),\varphi(2,v)\}$ must be edges from the same layer as $e'$, 
namely $L_s$.  It follows that $j_s = n/3$.  But then since each of the images 
$\varphi(0,v),\varphi(1,v),\varphi(2,v)$ lies in two triangles (namely a spoke and 
a triangle in $L_s$), each of the vertices $(0,v),(1,v)$ and $(2,v)$ must similarly lie in 
two triangles, and it follows that all three layers contain a triangle, so $j_0 = j_1 = j_2 = n/3$. 
In particular, $\gcd(j_0,j_1,j_2) = n/3$, and by connectedness, Proposition~\ref{thm:connectedGI} 
implies $n/3 = 1$, so $n = 3$ and $j_0 = j_1 = j_2 = 1$. 
Thus $X$ is $GI(3;1,1,1)$, which is well-known to be arc-transitive 
(see \cite{Lovrecic}, for example).

Finally, for the case $t =2$, everything we need was proved in \cite{Frucht} and \cite{igraphs}.
\end{proof}

%%%%%%%%%%%%%%%
\begin{Corollary}
\label{corollary:edgetransitive}
%%%%%%%%%%%%%%%
Every edge-transitive connected $GI$-graph is isomorphic to one of the eight graphs 
listed in Theorem~{\em\ref{theorem:skewautom}}. 
\end{Corollary}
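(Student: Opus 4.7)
The plan is to derive this immediately from Theorem~\ref{theorem:skewautom} by showing that edge-transitivity forces the existence of a skew automorphism. Throughout, I would work under the implicit convention of the theorem that $t \ge 2$, since for $t = 1$ a connected $GI$-graph is just a cycle and falls outside the scope of the statement.

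First I would observe that every connected $GI$-graph $X = GI(n; j_0, \dots, j_{t-1})$ with $t \ge 2$ has both spoke edges and layer edges. Spoke edges exist because $t \ge 2$, so there is some pair of distinct indices $s, s' \in \bZ_t$ contributing an edge from $(s, v)$ to $(s', v)$. Layer edges exist because each $j_s$ satisfies $0 < j_s < n$ and $j_s \ne n/2$, which guarantees that $(s, v)$ and $(s, v + j_s)$ are genuinely distinct vertices producing an edge. Hence both parts of the fundamental edge-partition of $X$ are nonempty.

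Now if $X$ is edge-transitive, then $\Aut(X)$ acts transitively on the full edge set, so there must be some automorphism $\varphi \in \Aut(X)$ that maps a layer edge to a spoke edge. By definition, $\varphi$ is a skew automorphism. Theorem~\ref{theorem:skewautom} then applies directly and forces $X$ to be isomorphic to one of the eight listed graphs. The same theorem also records that each of those eight graphs is arc-transitive, hence edge-transitive, so the list is both necessary and sufficient.

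There is really no obstacle here: the entire content of the corollary is the elementary observation that under edge-transitivity a skew automorphism must exist, after which Theorem~\ref{theorem:skewautom} does all the work. The proof can be written in a few lines.
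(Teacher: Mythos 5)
Your proof is correct and matches the paper's (implicit) reasoning exactly: the corollary is stated without proof precisely because, once both edge classes are nonempty, edge-transitivity immediately yields an automorphism sending a spoke edge to a layer edge, which is skew by definition, and Theorem~\ref{theorem:skewautom} finishes the job. Your explicit handling of the $t=1$ case (a connected $GI$-graph with one layer is a cycle, hence edge-transitive but outside the theorem's scope) is a reasonable and slightly more careful touch than the paper, which leaves that convention tacit.
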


Hence from now on, we will consider only the automorphisms that respect the 
fundamental edge-partition.  
There are three special classes of such automorphisms:
 \\[+4pt] 
\begin{tabular}{cl} 
(1) & ${}$\hskip -8pt automorphisms that preserve every layer \\[+2pt] 
(2) & ${}$\hskip -8pt automorphisms that preserve every spoke \\[+2pt] 
(3) & ${}$\hskip -8pt automorphisms that permute both the layers and the spokes non-trivially.\\[+6pt] 
\end{tabular}
\\ 
\noindent 
We will consider particular cases of automorphisms of these types below. 

\medskip
Define mappings $\rho:V(X) \to V(X)$ and $\tau: V(X) \to V(X)$ given by 
\begin{equation*}   %\label{eq:rhotau}
%$$
  \rho(s,v) = (s,v+1) 
\ \ \
\mbox{and}
\ \ \ 
  \tau(s,v) = (s,-v) \quad  \hbox{ for all } s \in \bZ_t \hbox{ and all } v \in \bZ_n. \tag{$\dagger$}
%$$
\end{equation*}
Clearly these are automorphisms of $X$ of type (1), permuting the vertices in each
layer.  Indeed $\rho$ can be viewed as a rotation (of order $n$), and $\tau$ as a 
reflection (of order $2$), and it follows that the automorphism group of  $X$ 
contains a dihedral subgroup of order $2n$, generated by $\rho$ and $\tau$. 
These $2n$ automorphisms are all of type (1), and all of them respect the fundamental
edge-partition of $X$.

Next, if two of the members of the multiset $J$ are equal, 
say $j_{s_1}=j_{s_2}$ for $s_1 \ne s_2$,  then we have an automorphism 
$\lambda_{i,s_1,s_2}$ that  exchanges two cycles of layers 
$L_{s_1}$ and $L_{s_2}$, but preserves every spoke. These automorphisms are  of type (2).

%%%%%%%%%%%%
\begin{Proposition}  
\label{propn:mix_layers}
%%%%%%%%%%%%
Suppose $j_{s_1}=j_{s_2}$ where $s_1 \ne s_2$, and define $d=\gcd(n,j_{s_1}) = \gcd(n,j_{s_2})$. %\\[+1pt] 
Then for each $i \in \bZ_d$, the mapping $\lambda_{i,s_1,s_2}:V(X) \to V(X)$  given  by
$$\lambda_{i,s_1,s_2}(s,v)=\left\{ \begin{array}{ll}
      (s_2,v) & \mbox{if} \ \  s=s_1 \ \ \mbox{and} \ \ v \equiv i \ \ \mbox{\rm mod} \ d \\
      (s_1,v) & \mbox{if} \ \  s=s_2 \ \ \mbox{and} \ \ v \equiv i \ \ \mbox{\rm mod} \ d \\
      \,(s,v) &  \mbox{otherwise}
       \end{array} \right.
$$
is an automorphism of $X$, which respects the fundamental edge-partition, 
and preserves all layers other than $L_{s_1}$ and $L_{s_2}$.       
\end{Proposition}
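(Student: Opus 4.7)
The plan is to check directly that $\lambda=\lambda_{i,s_1,s_2}$ is a bijection, that it preserves adjacency, that it sends spoke edges to spoke edges and layer edges to layer edges, and that it fixes every layer other than $L_{s_1}$ and $L_{s_2}$. The last point is immediate from the definition, since $\lambda$ only moves vertices whose first coordinate is $s_1$ or $s_2$. The bijectivity is also essentially free: $\lambda$ is an involution, because each swap $(s_1,v)\leftrightarrow(s_2,v)$ (performed exactly when $v\equiv i\pmod d$) is self-inverse, so $\lambda^2$ is the identity.

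Next I would handle spoke edges. The key observation is that $\lambda$ maps each spoke $S_v$ to itself as a set: if $v\not\equiv i\pmod d$ then every vertex of $S_v$ is fixed, while if $v\equiv i\pmod d$ then $(s_1,v)$ and $(s_2,v)$ are swapped and the other vertices of $S_v$ are fixed. Since the subgraph induced on $S_v$ is the complete graph $K_t$, any permutation of its vertices preserves all its edges, so every spoke edge is sent to a spoke edge (of the same spoke). In particular, spoke edges are never mapped to layer edges.

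The one step that actually uses the hypothesis $j_{s_1}=j_{s_2}$ is the treatment of layer edges. Consider a layer edge $\{(s,v),(s,v+j_s)\}$. For $s\notin\{s_1,s_2\}$ the layer is pointwise fixed, so the edge is preserved. For $s=s_1$ (the case $s=s_2$ being symmetric) I would use the fact that $d=\gcd(n,j_{s_1})$ divides $j_{s_1}$, so $v\equiv v+j_{s_1}\pmod d$; this means that along a single layer edge of $L_{s_1}$ the residue mod $d$ is constant, and therefore either both endpoints are swapped by $\lambda$ or neither is. In the ``neither'' case the edge is fixed; in the ``both'' case the edge is sent to $\{(s_2,v),(s_2,v+j_{s_1})\}$, and invoking $j_{s_1}=j_{s_2}$ rewrites this as the layer edge $\{(s_2,v),(s_2,v+j_{s_2})\}$ of $L_{s_2}$. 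Thus layer edges go to layer edges, and the fundamental edge-partition is respected. I do not anticipate a genuine obstacle here; the argument is a short case-check, whose only nontrivial ingredient is the remark that $d\mid j_{s_1}$ keeps the residue class mod $d$ constant along each $L_{s_1}$-cycle, so the hypothesis $j_{s_1}=j_{s_2}$ matches an $n/d$-cycle of $L_{s_1}$ onto the corresponding $n/d$-cycle of $L_{s_2}$ edge by edge.
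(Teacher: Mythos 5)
Your proof is correct and follows the same direct verification that the paper's (much terser) proof merely asserts: the map is an involution, it fixes each spoke setwise (hence preserves spoke edges since spokes induce complete graphs), and it swaps corresponding layer cycles of $L_{s_1}$ and $L_{s_2}$ because $d\mid j_{s_1}$ keeps the residue mod $d$ constant along each cycle. Your write-up simply supplies the case-check that the paper leaves as ``obvious.''
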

\begin{proof} 
This is obviously a permutation of $V(X)$, preserving adjacency. 
Moreover, it is also clear that $\lambda_{i,s_1,s_2}$ preserves every spoke $S_v$, 
and exchanges one of the cycles in layer $L_{s_1}$ with the corresponding 
cycle in layer $L_{s_2}$, while preserving all other layer cycles. 
%and exchanges one or more of the cycles in layer $L_{s_1}$ with the corresponding 
%cycles in layer $L_{s_2}$, while preserving all other layer cycles. 
\end{proof}

%%%%%%%%%%%%%%
\begin{Corollary}
\label{cor:exchange_layers}
%%%%%%%%%%%%%%
Suppose $j_{s_1}=j_{s_2}$ where $s_1 \ne s_2$, and define $d=\gcd(n,j_{s_1}) = \gcd(n,j_{s_2})$. \\[+1pt] 
Then the product   
$$\lambda_{s_1,s_2} := \lambda_{0,s_1,s_2}\lambda_{1,s_1,s_2}\dots \lambda_{d-1,s_1,s_2}$$
is an automorphism of $X$ that respects the fundamental edge-partition, 
and exchanges layers $L_{s_1}$ and $L_{s_2}$, while preserving every other layer.       
\end{Corollary}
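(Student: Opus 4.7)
The plan is to build $\lambda_{s_1,s_2}$ as a product of the pairwise commuting automorphisms $\lambda_{i,s_1,s_2}$ from Proposition~\ref{propn:mix_layers}, each of which handles one residue class modulo $d$. Since each factor is already known to be an automorphism respecting the fundamental edge-partition, membership in $\Aut(X)$ (and preservation of the partition) will be automatic from the fact that compositions of such maps have the same property; the substantive content is only to check the action on vertices.

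First I would observe that for distinct $i, i' \in \bZ_d$, the automorphisms $\lambda_{i,s_1,s_2}$ and $\lambda_{i',s_1,s_2}$ commute. Indeed, by definition each one acts non-trivially only on the four vertices $(s_1,v)$ and $(s_2,v)$ with $v$ in a single residue class mod $d$, and these two sets of non-fixed points are disjoint because $i \not\equiv i' \pmod d$. So the order of the factors in the product $\lambda_{s_1,s_2} = \lambda_{0,s_1,s_2}\lambda_{1,s_1,s_2}\cdots \lambda_{d-1,s_1,s_2}$ is irrelevant.

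Next I would evaluate the product on an arbitrary vertex $(s,v) \in V(X)$. If $s \notin \{s_1,s_2\}$, then every factor fixes $(s,v)$, so $\lambda_{s_1,s_2}(s,v) = (s,v)$; hence every layer other than $L_{s_1}$ and $L_{s_2}$ is preserved pointwise. If $s = s_1$, let $i \in \bZ_d$ be the unique residue with $v \equiv i \pmod d$; then $\lambda_{i,s_1,s_2}$ sends $(s_1,v)$ to $(s_2,v)$, while every other factor fixes both $(s_1,v)$ and $(s_2,v)$. Since the factors commute, we may apply $\lambda_{i,s_1,s_2}$ last and conclude $\lambda_{s_1,s_2}(s_1,v) = (s_2,v)$. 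An identical argument (or the fact that each $\lambda_{i,s_1,s_2}$ is an involution) yields $\lambda_{s_1,s_2}(s_2,v) = (s_1,v)$ for every $v$. Thus $\lambda_{s_1,s_2}$ swaps layers $L_{s_1}$ and $L_{s_2}$ vertex-by-vertex and fixes all other layers.

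Finally, $\lambda_{s_1,s_2}$ is an automorphism as a composition of automorphisms, and it respects the fundamental edge-partition because each factor does. There is no real obstacle here; the only thing to be careful about is the disjointness of the supports of the factors, which is exactly what allows the product to consistently exchange the two layers in full.
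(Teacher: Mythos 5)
Your argument is correct and is exactly the intended one: the paper states this Corollary without proof as an immediate consequence of Proposition~\ref{propn:mix_layers}, and your verification (disjoint supports of the factors $\lambda_{i,s_1,s_2}$ for distinct $i\in\bZ_d$, hence commutativity, hence the product swaps the two layers pointwise) fills in precisely that omitted routine check. One trivial slip: the support of $\lambda_{i,s_1,s_2}$ consists of the $2n/d$ vertices $(s_1,v),(s_2,v)$ with $v\equiv i \pmod d$, not ``four vertices''; this does not affect the disjointness claim or anything downstream.
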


There is another family of automorphisms exchanging layers that exist in 
some situations; but these automorphism do not preserve spokes,
and so they are of type (3): 

%%%%%%%%%%%%%%
\begin{Proposition}
\label{propn:change_layers}
%%%%%%%%%%%%%%
%Let $J=\{j_0,j_1, \dots,j_{t-1}\}$ and $a \in \bZ_n^{\,*}$ be such that 
%$aJ =\{p_0j_0, p_1j_1, \dots,p_{t-1}j_{t-1}\}$ where $p_0,p_1,\dots,p_{t-1} \in \{1,-1\}$.
%Define a bijection $\alpha: \bZ_t \to \bZ_t$  such that
%$\alpha(s) = \overline{s}$ if $j_{\overline{s}}=\pm a j_s$.
Let $a$ be any unit in $\bZ_n$ with the property that $aJ =\{\pm j_0, \pm j_1, \dots, \pm j_{t-1}\}$, 
and then let $\alpha: \bZ_t \to \bZ_t$ be any bijection with the property that
$j_{\alpha(s)} = \pm a j_s$ for all $s \in \bZ_t$.\\[+1pt] 
Then the mapping $\sigma_a: V(X) \to V(X)$ given by
$$
\sigma_a(s,v) = (\alpha(s),a v) \ \ \hbox{ for all } s \in \bZ_t \, \hbox{ and all } v \in \bZ_n 
$$
is an automorphism of $X$ that respects the fundamental edge-partition.
\end{Proposition}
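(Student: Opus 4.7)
The plan is to verify three things in turn: that $\sigma_a$ is a bijection, that it carries spoke edges to spoke edges, and that it carries layer edges to layer edges. Together the last two give that $\sigma_a$ is a graph homomorphism which respects the fundamental edge-partition, and combined with bijectivity this makes it an automorphism of the required form.

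For bijectivity, note that multiplication by $a$ is a bijection of $\bZ_n$ (since $a$ is a unit) and $\alpha$ is a bijection of $\bZ_t$ by hypothesis, so the product map $(s,v) \mapsto (\alpha(s), av)$ is a bijection of $\bZ_t \times \bZ_n = V(X)$. For a spoke edge, by definition it joins $(s,v)$ to $(s',v)$ for some common $v \in \bZ_n$ and distinct $s, s' \in \bZ_t$. Its image under $\sigma_a$ is the pair $(\alpha(s), av)$ and $(\alpha(s'), av)$, which share the second coordinate $av$ and have distinct first coordinates (because $\alpha$ is a bijection). Hence this image is again a spoke edge, now lying in the spoke $S_{av}$.

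For a layer edge, it joins $(s, v)$ to $(s, v + j_s)$ (the $v - j_s$ case being handled symmetrically). Applying $\sigma_a$ yields the pair
\[
(\alpha(s), av) \ \hbox{ and } \ (\alpha(s), a(v + j_s)) = (\alpha(s), av + aj_s).
\]
By the defining property of $\alpha$, we have $aj_s = \varepsilon\, j_{\alpha(s)}$ for some sign $\varepsilon \in \{+1,-1\}$, so $av + aj_s = av + \varepsilon\, j_{\alpha(s)}$. This is exactly one of the two layer-edge neighbours of $(\alpha(s), av)$ within the layer $L_{\alpha(s)}$, so the image is a layer edge as required.

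The only subtlety — and it is really only bookkeeping rather than a serious obstacle — is the handling of signs: one must observe that the defining adjacencies of $X$ at a vertex $(s,v)$ in layer $s$ are the unordered pair $\{v+j_s, v-j_s\}$, so it does not matter whether $aj_s$ equals $+j_{\alpha(s)}$ or $-j_{\alpha(s)}$; either way the adjacency is preserved, and the reverse direction (layer edges from $v - j_s$) works by the same calculation with the opposite sign. Since $\sigma_a$ carries every spoke edge to a spoke edge and every layer edge to a layer edge, it is an edge-preserving bijection and therefore an automorphism of $X$ that respects the fundamental edge-partition.
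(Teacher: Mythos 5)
Your proposal is correct and follows essentially the same route as the paper: establish that $\sigma_a$ is a bijection (the paper does this via surjectivity plus finiteness, you via the product of two bijections, an immaterial difference) and then verify directly that spoke edges map to spoke edges in $S_{av}$ and that layer edges map to layer edges via the identity $av \pm aj_s = av \pm j_{\alpha(s)}$. No gaps; the sign bookkeeping you flag is handled the same way in the paper's computation.
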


\begin{Remarks}
Note that the mapping $\alpha$ is not uniquely determined 
if there exist distinct $s_1$ and $s_2$ for which $j_{s_1}= \pm j_{s_2}$, 
but we can always define the mapping $\alpha$ so that it is a bijection 
(and satisfies $j_{\alpha(s)} = \pm a j_s$ for all $s \in \bZ_t$). 
%If we require that for $j_{s_1}= \pm j_{s_2}$ it holds
%$\overline{s_1} < \overline{s_2}$ whenever $s_1 < s_2$, the mapping is also 
%uniquely defined.
Indeed $\alpha$ is uniquely determined if we require that $\alpha(s_1) < \alpha(s_2)$ 
whenever $s_1 < s_2$ and $j_{s_1}= \pm j_{s_2}$. %\\[+1pt] 
%\medskip
%
%\noindent (ii) Note that  $\sigma_a$ is not an automorphism of
%$X$ if the  condition $aJ =\{p_0j_0,p_1j_1, \dots,p_{t-1}j_{t-1}\}$ or equivalently, 
%$a(J \cup -J)=J \cup -J$, does not hold, since it is not even well-defined. 
On the other hand, $\sigma_a$ is not defined when the 
condition $aJ =\{\pm j_0, \pm j_1, \dots, \pm j_{t-1}\}$ fails 
(or equivalently, when $a(J \cup -J) \ne J \cup -J$). 
Note also that $\sigma_1$ is the identity automorphism, while $\sigma_{-1}$ is the 
automorphism $\tau$ defined earlier, since for $a = -1$ we may take $\alpha$ as 
the identity permutation and then $\sigma_{-1}(s,v) = (s,-v) = \tau(s,v)$ for every vertex $(s,v)$. 
\end{Remarks}
\begin{proof}
%Since $\alpha$ is a bijection and $a$ is cprime to $n$,
%the mapping $\sigma_a$ is  well-defined.
%
%To prove that $\sigma_a$ is a permutation it suffices to show
%that it is surjective, since it is a mapping between two sets of the same size.
First, let $b$ be the multiplicative inverse of $a$ in $\bZ_n^{\,*}$. 
Then for any $(s,v) \in \bZ_t \times \bZ_n$, we have 
$\sigma_a(\alpha^{-1}(s),bv)=(\alpha(\alpha^{-1}(s)),abv)=(s,v)$, 
and therefore $\sigma_a$ is surjective. 
Since $V(X)$ is finite, it follows that $\sigma_a$ is a permutation. 
Also $\sigma_a$ preserves edges, indeed it respects the  fundamental edge-partition, 
because it takes each neighbour $(s^\prime,v)$ of the vertex $(s,v)$ in the spoke $S_v$ 
to the neighbour $(\alpha(s^\prime), av)$ of the vertex $(\alpha(s), av)$ in the spoke $S_{av}$, 
and takes the two neighbours $(s,v \pm j_s)$ of the vertex $(s,v)$ in the layer $L_s$ 
to the two neighbours $\sigma_a(s,v \pm j_s)=(\alpha(s),a (v \pm j_s))=(\alpha(s), a v \pm a j_s) 
=(\alpha(s),a v \pm j_{\alpha(s)})$ of the vertex $(\alpha(s), av)$ in the layer $L_{\alpha(s)}$. 
\end{proof}

In the remaining part of this section we will show that if the $GI$-graph $X$ is connected, 
then the automorphisms described above and their products give all of the automorphisms of $X$ 
that respect the fundamental edge-partition. 

\smallskip
For this we require two technical Lemmas, the proofs of which are obvious. 
%since edges in different spokes (resp.~edges in different layers) have no common vertex.

%%%%%%%%%%%%%
\begin{Lemma}
\label{lemma:spokespoke}
%%%%%%%%%%%%%
Let $X$ be a connected $GI$-graph with at least two layers. 
Then every automorphism of $X$ that preserves spoke edges 
must permute the spokes $($like blocks of imprimitivity$)$.
\end{Lemma}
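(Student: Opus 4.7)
The plan is to observe that the spoke-edge subgraph of $X$ has a very rigid structure, and then exploit this. Recall that the spoke edges are precisely those of the form $\{(s,v),(s',v)\}$ with $s \ne s'$; they all connect vertices sharing the same second coordinate. Consequently, if we delete all the layer edges from $X$ and retain only the spoke edges, the resulting graph $X_{\mathrm{sp}}$ is a vertex-disjoint union of the $n$ cliques $K_t$, one on each spoke $S_v = \{(s,v) : s \in \bZ_t\}$. Since $t \ge 2$, none of these cliques is trivial, so the connected components of $X_{\mathrm{sp}}$ are exactly the spokes $S_v$ for $v \in \bZ_n$.

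Now let $\varphi$ be any automorphism of $X$ that preserves spoke edges (meaning it sends spoke edges to spoke edges). Because $\varphi$ is a bijection on $V(X)$ mapping spoke edges to spoke edges, it induces a graph automorphism of $X_{\mathrm{sp}}$. Any graph automorphism permutes connected components as blocks, so $\varphi$ permutes the spokes $S_v$ as sets. In particular, the family $\{S_v : v \in \bZ_n\}$ is a system of blocks for the action of the subgroup of $\Aut(X)$ consisting of all automorphisms that preserve the spoke edges, which is exactly the conclusion of the lemma.

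There is no real obstacle: once one identifies the spokes as the connected components of the spoke-edge subgraph, the result is immediate from the elementary fact that graph automorphisms permute connected components. The only thing to double-check is the hypothesis $t \ge 2$ (so that the spoke-edge subgraph has no isolated vertices and each spoke genuinely appears as a component), and the connectedness of $X$ is used only insofar as it guarantees the setting stated at the beginning of Section~\ref{automs}.
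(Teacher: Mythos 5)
Your proof is correct, and it is exactly the argument the paper has in mind: the paper declares this lemma's proof ``obvious,'' and the obvious argument is precisely yours, namely that the spokes are the connected components of the spoke-edge subgraph (each a $K_t$ with $t\ge 2$), so any automorphism preserving spoke edges permutes them. Nothing further is needed.
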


\begin{Lemma}            \label{lemma:layers} 
Every automorphism of a $GI$-graph that respects the fundamental edge-partition 
must permute the layer cycles.
\end{Lemma}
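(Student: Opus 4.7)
The plan is to realise the layer cycles as the connected components of a natural spanning subgraph, and then use the fact that any graph automorphism permutes connected components. Concretely, let $Y$ be the spanning subgraph of $X$ whose edge set consists of all layer edges. By the definition of a $GI$-graph, each vertex $(s,v)$ is incident to exactly the two layer edges joining it to $(s,v + j_s)$ and $(s,v - j_s)$; since $j_s \ne n/2$ (and $j_s \ne 0$), these two neighbours are distinct, so $Y$ is $2$-regular and therefore a disjoint union of cycles.

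Next I would identify these cycles with the layer cycles. No layer edge crosses from one layer to another, so each connected component of $Y$ is contained in some layer $L_s$, and within $L_s$ the induced subgraph decomposes into $d_s = \gcd(n, j_s)$ cycles of length $n/d_s$. Thus the set of connected components of $Y$ is exactly the set of layer cycles of $X$.

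Finally, if $\varphi$ is an automorphism of $X$ that respects the fundamental edge-partition, then $\varphi$ sends layer edges to layer edges and hence restricts to an automorphism of $Y$. Any graph automorphism permutes the connected components of the underlying graph, so $\varphi$ permutes the components of $Y$, that is, the layer cycles.

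There is no real obstacle here: the only point that needs any care is checking that each vertex has two distinct incident layer edges (so that $Y$ is genuinely a disjoint union of cycles rather than containing degree-$1$ vertices), and this is immediate from the standing hypotheses $0 < j_s < n$ and $j_s \ne n/2$ built into the definition of a $GI$-graph.
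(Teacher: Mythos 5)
Your proof is correct: the paper itself gives no argument for this lemma (it is one of the ``two technical Lemmas, the proofs of which are obvious''), and your component-counting argument on the $2$-regular spanning subgraph of layer edges is exactly the intended justification, including the one point that genuinely needs checking, namely that $j_s \ne 0, n/2$ forces the two layer neighbours of each vertex to be distinct.
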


It will also be helpful to relate the automorphisms of a $GI$-graph to the automorphisms
of the corresponding circulant graph.

%Let $S \subseteq \bZ_n$ be a multiset such that $S=-S$ and $0, n/2 \not \in S$.
%We define a \emph{(generalized) circulant graph} $\Circ(n;S)$
%as follows. The vertex set of $\Circ(n;S)$ is $\bZ_n$. There is an edge
%between $u$ and $v$ for every element $a$ of $S$ such that $u-v \bmod n = \pm a$.
%Note that there are multiple edges in $\Circ(n;S)$ if $S$ is a multiset.
Let $S$ be a subset of $\bZ_n$ such that $S=-S$ and $0 \not \in S$.
Then the \emph{circulant graph} $\Circ(n;S)$ is defined as the graph with 
vertex set $\bZ_n$, such that vertices $u$ and $v$ are adjacent precisely 
when $u-v \equiv a$ mod $n$ for some $a \in S$. 
Equivalently, this is the Cayley graph for $\bZ_n$ given by the subset $S$. 
Note that $\Circ(n;S)$ is connected if and only if $S$ additively generates $\bZ_n$, 
that is, if and only if some linear combination of the members of $S$ is $1$ mod $n$. 

%%%-- Joy Morris contrubution --%%%%%%%%%%%%%%%%%%%%%%%%
Now suppose that $S=\{s_1, \dots, s_c\}$, and that $\Gamma=\Circ(n;S)$ is connected. 

For $1 \le i \le c$, let $G_{i,1},G_{i,2}, \ldots, G_{i,k_i}$ be the distinct cosets 
of the cyclic subgroup $G_{i,1}=\langle s_i \rangle$ in $G=\langle S\rangle$.
%\\[+2pt] 
Then we can form a partition $\mathcal C = \{ C_{ij} \}$ 
of the edges of $\Gamma$, where 
$$
C_{ij} =\{\,\{g,g+s_i\}: \, g \in G_{i,j} \,\} \quad \hbox{ for } \, 1 \le j \le k_i \, \hbox{ and } \,1 \le i \le c.
$$ 
Notice that each part $C_{ij}$ of $\mathcal C$ consists of precisely the edges 
of a cycle formed by adding multiples of the single element $s_i$ of $S$ to 
a member of the coset $G_{i,j}$. 

We say that an automorphism $\varphi$ of $\Gamma$ \emph{respects the partition} $\mathcal C$ 
if $\varphi(C_{ij}) \in \mathcal C$ for every  $C_{ij} \in \mathcal C$.  
We have  the following, thanks to Joy Morris: 

%%%%%%%%%%
\begin{Theorem}  
\label{thm:joymorris}
%%%%%%%%%%
Suppose the circulant graph $\,\Gamma=\Circ(n;S)$ is connected.  
If $\psi$ is an automorphism of $\Gamma$ which fixes the vertex $0$ and 
respects the partition $\mathcal C = \{ C_{ij} \}$, then $\psi$ is induced 
by some automorphism of $\bZ_{n}$ --- that is, there exists a unit $a \in \bZ_n$ 
with the property that $\, \psi(x)=ax\,$ for every $x \in \bZ_n$ 
$($and in particular, $aS = S)$.
\end{Theorem}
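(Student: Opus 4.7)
The plan is to show that $\psi$ must agree with multiplication by some unit $a\in\bZ_n^{\,*}$. The strategy has three stages: first identify the image of each $s_i\in S$, then extend to the rule $\psi(v+s_k)=\psi(v)+\epsilon_k s_{\pi(k)}$ for all $v\in\bZ_n$ by propagating through $\Gamma$, and finally conclude that $\psi$ is the automorphism of $\bZ_n$ given by multiplication by $a$.

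Exploiting $\psi(0)=0$ and the respected partition: for each $i$ the cycle $C_{i,1}$, whose vertex set is $\langle s_i\rangle$, is the unique cycle $C_{i,j}$ containing $0$, so $\psi(C_{i,1})=C_{\pi(i),1}$ for some permutation $\pi$ of the indices $1,\dots,c$. Since $\bZ_n$ has a unique subgroup of each admissible order, $\langle s_i\rangle=\langle s_{\pi(i)}\rangle$. The vertex $\psi(s_i)$ is a neighbour of $0$ in $C_{\pi(i),1}$, hence $\psi(s_i)=\epsilon_i s_{\pi(i)}$ for some $\epsilon_i\in\{\pm 1\}$, and walking around $C_{i,1}$ yields $\psi(ks_i)=k\epsilon_i s_{\pi(i)}$ for every $k\in\bZ$. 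The same reasoning on any other cycle $C_{i,j}$ produces a sign $\epsilon_{i,j}\in\{\pm 1\}$ with $\psi(v+s_i)-\psi(v)=\epsilon_{i,j}\,s_{\pi(i)}$ for every $v\in G_{i,j}$, and $\epsilon_{i,1}=\epsilon_i$ by definition.

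The crux is to show $\epsilon_{i,j}=\epsilon_i$ for all $j$. For any $v\in\bZ_n$ and any $s_k\in S$ with $s_k\neq\pm s_i$, the $4$-cycle on $v,\,v+s_i,\,v+s_i+s_k,\,v+s_k$ in $\Gamma$ contains two $s_i$-edges and two $s_k$-edges, and computing $\psi(v+s_i+s_k)-\psi(v)$ along the two $2$-step paths in this square gives the identity
\[(\epsilon_{i,j}-\epsilon_{i,j'})\,s_{\pi(i)} \;=\; (\epsilon_{k,\ell}-\epsilon_{k,\ell'})\,s_{\pi(k)},\]
where $j,j'$ and $\ell,\ell'$ record which $s_i$- and $s_k$-cycles contain the respective edges. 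Beginning at $v=0$ (where $\epsilon_{i,1}=\epsilon_i$ and $\epsilon_{k,1}=\epsilon_k$ by construction) and iterating this relation along walks emanating from $0$, the connectedness of $\Gamma$ should force both sides to vanish, yielding $\epsilon_{i,j}=\epsilon_i$ uniformly in $j$. The degenerate case $s_{\pi(i)}=n/2$, where $2s_{\pi(i)}\equiv 0$ and the sign on $C_{i,j}$ is intrinsically irrelevant, is handled separately.

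Once this sign-consistency is established, $\psi(v+s_k)=\psi(v)+\epsilon_k s_{\pi(k)}$ for all $v$ and $k$; combined with $\psi(0)=0$ and the fact that $S$ generates $\bZ_n$ by connectedness, this exhibits $\psi$ as a group endomorphism of $\bZ_n$, and hence multiplication by some $a\in\bZ_n$. Bijectivity of $\psi$ forces $a\in\bZ_n^{\,*}$, and $as_i=\psi(s_i)=\epsilon_i s_{\pi(i)}\in S$ for every $i$ gives $aS\subseteq S$, with equality by cardinality. The main obstacle is the sign-propagation: the square identity is inherently local, so globalising to $\epsilon_{i,j}=\epsilon_i$ for every cycle requires careful bookkeeping along paths in $\Gamma$, together with a separate check in the case $2s_{\pi(i)}=0$.
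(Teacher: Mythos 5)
The paper does not actually prove this theorem: it is quoted from Joy Morris's work, and the text only remarks that the proof in \cite{Morris} proceeds by induction on $|S|$. So you are attempting a direct, non-inductive argument. Your reduction of the problem to a sign-consistency statement is reasonable, and your endgame is fine: once $\psi(v+s_k)=\psi(v)+\epsilon_k s_{\pi(k)}$ holds for \emph{all} $v$ and $k$, additivity of $\psi$ follows from connectedness, bijectivity forces $a\in\bZ_n^{\,*}$, and $aS=S$ by cardinality. The problem is that the central step is not carried out, and as sketched it does not close.

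Two concrete issues. First, you assume that all cycles $C_{i,1},C_{i,2},\dots$ attached to one generator $s_i$ are sent to cycles attached to a single generator $s_{\pi(i)}$; a priori $\psi$ could send $C_{i,j}$ and $C_{i,j'}$ to cycles of two \emph{different} generators whose cycles have the same length (distinct elements of $S$ can generate the same cyclic subgroup), so the target index may depend on $j$, and this needs an argument. Second, and more seriously, the four-cycle identity $(\epsilon_{i,j}-\epsilon_{i,j'})\,s_{\pi(i)}=(\epsilon_{k,\ell}-\epsilon_{k,\ell'})\,s_{\pi(k)}$ forces both sides to vanish only when $2s_{\pi(i)}\neq 0$ and $2s_{\pi(i)}\neq\pm 2s_{\pi(k)}$ in $\bZ_n$. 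The degenerate case is therefore not just $s_{\pi(i)}=n/2$: for even $n$, any pair with $s_{\pi(i)}\equiv\pm s_{\pi(k)}\pmod{n/2}$ allows a sign flip in the $s_i$-system to be compensated by a simultaneous flip in the $s_k$-system, with both sides of the identity equal to $\pm 2s_{\pi(i)}\neq 0$. Your plan offers no mechanism for excluding such globally consistent but non-constant sign assignments; the sentence ``connectedness should force both sides to vanish'' is precisely the assertion that requires proof. This is a genuine gap at the crux. To repair it you would need a global argument that handles these congruence degeneracies (and the possible $j$-dependence of the target generator), or else follow \cite{Morris} and argue by induction on $|S|$.
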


For a proof (by induction on $|S|$), see \cite{Morris}. 
%%%%%%%%%%%%%%%%%%%%%%%%%%%%%%%%%%%%%%%%%
To apply it, we associate with our graph $X= GI(n;J)$ the circulant graph $Y = \Circ(n;S \cup -S)$,  
where $S$ is the underlying set of $J$.  \\ Note that the projection 
$\eta : V(X) \to V(Y)$ given by $\eta(s,v)$ = $v$ takes every layer edge $\{(s,v), (s,v + j_s)\}$ 
of $X$ to the edge $\{v,v + j_s\}$ of $Y$, and hence gives a graph homomorphism from 
the subgraph of $X$ induced on layer edges onto the graph $Y$. 

%%%%%%%%%%
\begin{Proposition}
\label{propn:XtoY}
%%%%%%%%%%
Every automorphism of $X= GI(n;J)$ that preserves the set of spoke edges 
induces an automorphism of $Y = \Circ(n;S \cup -S)$ that respects the partition $\mathcal C = \{ C_{ij} \}$. 
\end{Proposition}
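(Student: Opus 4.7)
The plan is to assemble $\bar\varphi$ from $\varphi$ using the two preceding Lemmas, and then verify the two required properties in turn. Given an automorphism $\varphi$ of $X$ preserving spoke edges, Lemma~\ref{lemma:spokespoke} says $\varphi$ permutes the spokes as blocks of imprimitivity, so there is a well-defined permutation $\bar\varphi$ of $\bZ_n$ characterized by $\varphi(S_v) = S_{\bar\varphi(v)}$ for every $v \in \bZ_n$. Equivalently, $\bar\varphi(v)$ is the common second coordinate $\eta(\varphi(s,v))$ of the image of any vertex $(s,v)$ in the spoke $S_v$.

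To show $\bar\varphi$ is an automorphism of the circulant graph $Y$, I would use that $\varphi$, being a bijection on edges that preserves the spoke edges as a set, must also preserve the complementary set of layer edges. Given an edge $\{u,w\}$ of $Y$, by definition $w - u \equiv \pm j_s$ mod $n$ for some $s \in \bZ_t$, and so $\{(s,u),(s,w)\}$ is a layer edge of $X$. Its $\varphi$-image is some layer edge $\{(s',u'),(s',w')\}$ with $w'-u' \equiv \pm j_{s'}$, and by the definition of $\bar\varphi$ we have $u' = \bar\varphi(u)$ and $w' = \bar\varphi(w)$. Hence $\bar\varphi(w) - \bar\varphi(u) \in S \cup -S$, so $\{\bar\varphi(u),\bar\varphi(w)\}$ is an edge of $Y$. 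Applying the same argument to $\varphi^{-1}$ (which also preserves spoke edges) shows that $\bar\varphi$ is a graph automorphism of $Y$.

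To see that $\bar\varphi$ respects $\mathcal{C}$, the key observation is that under $\eta$ each layer cycle of $X$ projects bijectively onto its vertex set, a single coset $G_{i,j}$ of the relevant cyclic subgroup, and its edge set projects onto the corresponding part $C_{ij}$. Every $C_{ij}$ is realized in this way: pick any $s$ with $j_s = s_i$ (which exists because $s_i \in S$), and any layer cycle $\gamma$ in $L_s$ whose vertices project to $G_{i,j}$. By Lemma~\ref{lemma:layers}, $\varphi(\gamma)$ is another layer cycle, say $\gamma'$, whose edge set projects to a part $C_{i'j'}$ of $\mathcal{C}$. Since $\bar\varphi$ agrees with the projection of $\varphi$ on vertices, applying $\eta$ to $\varphi(\gamma) = \gamma'$ gives $\bar\varphi(C_{ij}) = C_{i'j'}$, as required.

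I do not foresee any serious obstacle: the argument is essentially a diagram chase through $\eta$ using Lemmas~\ref{lemma:spokespoke} and~\ref{lemma:layers}. The only subtle point is that when two entries of $J$ coincide, a single part $C_{ij}$ is the $\eta$-image of several layer cycles of $X$, and $\varphi$ need not send all of these to layer cycles projecting onto a common part; but any one lift $\gamma$ suffices to identify $\bar\varphi(C_{ij})$, so this multiplicity causes no harm.
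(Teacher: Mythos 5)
Your proof is correct and follows essentially the same route as the paper's (much terser) argument: use Lemma~\ref{lemma:spokespoke} to get the induced permutation of $\bZ_n$ via $\eta$, the preservation of layer edges to see it is an automorphism of $Y$, and Lemma~\ref{lemma:layers} to see it respects $\mathcal C$. The only quibble is your closing caveat: since $\eta\circ\varphi=\bar\varphi\circ\eta$ on edges, all lifts of a given part $C_{ij}$ are automatically sent to layer cycles projecting onto the \emph{same} part, so the worry you raise cannot actually occur — but as you note, it is harmless either way.
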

\begin{proof}
Any such automorphism $\varphi$ induces a permutation on the set of spokes of $X$, 
and hence under the above projection $\eta$, induces an automorphism of $Y$, say $\psi$. 
Moreover, since $\varphi$ preserves the layer edges, it must permute the layer cycles 
among themselves, and it follows that $\psi$ respects the partition $\mathcal C = \{ C_{ij} \}$. 
\end{proof}

%%%%%%%%%%%%%%%%
\begin{Corollary}
\label{cor:preserve_layercycles}
%%%%%%%%%%%%%%%%
Suppose $X$ is connected. Then every automorphism of $X=GI(n;J)$ that 
respects the fundamental edge-partition of $X$ is expressible as a product of powers of 
the rotation $\rho$, the reflection $\tau$, and the automorphisms $\lambda_{i,s_1,s_2}$ 
and $\sigma_a$ defined in Proposition~{\em\ref{propn:mix_layers}} and
Proposition~{\em\ref{propn:change_layers}}.
\end{Corollary}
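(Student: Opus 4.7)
The plan is to reduce a general edge-partition-respecting automorphism $\varphi$ of $X$ to the identity, step by step, by peeling off automorphisms from the four listed families.

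First, since $\varphi$ respects the fundamental edge-partition it preserves the set of spoke edges, so by Lemma~\ref{lemma:spokespoke} it permutes the spokes. Identifying spoke $S_v$ with $v \in \bZ_n$, this yields an induced automorphism $\psi$ of $Y = \Circ(n; S \cup -S)$ which by Proposition~\ref{propn:XtoY} respects the partition $\mathcal{C}$. Let $k = \psi(0)$ and replace $\varphi$ by $\rho^{-k}\varphi$, so that the new induced $\psi$ fixes $0$. Theorem~\ref{thm:joymorris} then provides a unit $a \in \bZ_n$ with $\psi(v) = av$ for all $v$, and with $a(S \cup -S) = S \cup -S$.

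Second, I need to check that the automorphism $\sigma_a$ of Proposition~\ref{propn:change_layers} is actually defined, i.e.\ that $aJ = \{\pm j_0,\dots,\pm j_{t-1}\}$ as \emph{multisets}, not merely as sets. Fix $j \in S$; then $aj \equiv \pm j'$ for a unique $j' \in S$, and $\gcd(n,aj) = \gcd(n,j)$, so layer cycles with parameter $j$ have the same length as those with parameter $j'$. Because $\varphi$ sends layer cycles bijectively to layer cycles (Lemma~\ref{lemma:layers}) and its action on $Y$ is multiplication by $a$, counting the cycles of $X$ that project to the $\pm j$-family of cycles in $Y$ forces the multiplicity of $j$ in $J$ to equal the multiplicity of $j'$ in $J$. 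Hence $\sigma_a$ exists, and replacing $\varphi$ by $\varphi'' := \sigma_a^{-1}\rho^{-k}\varphi$ gives an automorphism that preserves every spoke $S_v$ setwise.

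Third, I analyse $\varphi''$. Since each spoke is preserved setwise I may write $\varphi''(s,v) = (\pi_v(s), v)$ for some permutation $\pi_v$ of $\bZ_t$. The condition that layer edges go to layer edges forces $j_{\pi_v(s)} = j_s$ and $\pi_{v+j_s}(s) = \pi_v(s)$, so $\pi_v(s)$ depends only on $s$ and on the coset $v + \langle j_s\rangle$, and it takes values in the equal-parameter class $\{s' : j_{s'} = j_s\}$. For each such class and each coset index $i \in \bZ_{d_s}$ the resulting permutation can be written as a product of transpositions of the form $\lambda_{i,s_1,s_2}$, and these actions on different cosets commute. Composing $\varphi''$ with the inverse of a suitable product of such $\lambda$'s produces the identity, which unwound gives the desired factorisation of $\varphi$.

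The main obstacle I anticipate is the multiplicity bookkeeping in the second stage: Theorem~\ref{thm:joymorris} controls only the underlying set $S$, whereas $\sigma_a$ depends on the multiset structure of $J$. The cycle-counting argument via Lemma~\ref{lemma:layers} closes this gap, but only if one uses the lemma in its full strength — that \emph{layer cycles}, not merely layers, are permuted — and keeps careful track of how a single layer with $\gcd(n,j_s) > 1$ breaks into cycles that can migrate between equal-parameter layers; this latter freedom is exactly what the $\lambda$-family absorbs in the final stage.
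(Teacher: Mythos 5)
Your proof follows essentially the same route as the paper's: reduce by a power of $\rho$ so that the induced map on $Y=\Circ(n;S\cup -S)$ fixes $0$, invoke Theorem~\ref{thm:joymorris} to extract a unit $a$, peel off $\sigma_a$ to obtain a spoke-preserving automorphism, and decompose what remains into the $\lambda_{i,s_1,s_2}$. Your cycle-counting verification that $a$ preserves the \emph{multiset} $J\cup -J$ (so that $\sigma_a$ is actually defined) is a point the paper's proof passes over silently, and it is correct.
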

\begin{proof}
First, any such automorphism $\varphi$ induces a permutation on the set of spokes of $X$, 
and so by multiplying by a suitable element of the dihedral group of order $2n$ 
generated by $\rho$ and $\tau$, we may replace $\varphi$ by an automorphism 
$\varphi'$ that respects the fundamental edge-partition of $X$, and preserves 
the spoke $S_0$.  In particular, $\varphi'$ induces an 
automorphism of $Y = \Circ(n;S \cup -S)$ that fixes the vertex $0$. 
By Theorem~\ref{thm:joymorris}, this automorphism of $Y$ is induced 
by multiplication by some unit $a \in \bZ_n$, and then by multiplying by the 
inverse of $\sigma_a$ we may replace $\varphi'$ by an automorphism 
$\varphi''$ that preserves all of the spokes $S_v$.  
Finally, since $\varphi''$ preserves all of the spokes and also permutes the 
layer cycles among themselves, $\varphi''$ is expressible as a product of the 
automorphisms $\lambda_{i,s_1,s_2}$ defined in Proposition~\ref{propn:mix_layers}. 
\end{proof}

As a special case, we have also the following, for the automorphisms that preserve layers: 

%%%%%%%%%%%%%%
\begin{Corollary} 
\label{cor:preserve_layers}
%%%%%%%%%%%%%%
Suppose $X$ is connected.  Then any automorphism of $X=GI(n;J)$ that 
takes layers to layers is a product of powers of the rotation $\rho$, the reflection $\tau$, 
and the automorphisms $\lambda_{s_1,s_2}$ and $\sigma_a$ defined 
in Corollary~{\em\ref{cor:exchange_layers}} and Proposition~{\em\ref{propn:change_layers}}.
\end{Corollary}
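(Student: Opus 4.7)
The plan is to refine the proof of Corollary~\ref{cor:preserve_layercycles}, exploiting the stronger hypothesis --- that entire layers $L_s$ (not merely layer cycles) are permuted --- in order to align a single $\sigma_a$ with the layer action of $\varphi$ so that no residual partial layer-exchange is left.

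First I would observe that any automorphism $\varphi$ of $X$ that takes layers to layers must respect the fundamental edge-partition: layer edges go to layer edges by hypothesis, and bijectivity on edges then forces spoke edges to go to spoke edges. Let $\alpha : \bZ_t \to \bZ_t$ be the permutation defined by $\varphi(L_s) = L_{\alpha(s)}$. Following the first two reductions in the proof of Corollary~\ref{cor:preserve_layercycles}, I would multiply $\varphi$ on the left by a suitable element of the dihedral group $\langle \rho, \tau \rangle$ to obtain $\varphi'$ that fixes the spoke $S_0$; because $\rho$ and $\tau$ each preserve every layer, $\varphi'$ still induces the same permutation $\alpha$ on layers. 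By Proposition~\ref{propn:XtoY} and Theorem~\ref{thm:joymorris}, $\varphi'$ descends to multiplication by some unit $a \in \bZ_n$ on the circulant graph $Y$. Since $\varphi'$ simultaneously sends $L_s$ to $L_{\alpha(s)}$, matching edges of ``type $j_s$'' in $Y$ to edges of ``type $j_{\alpha(s)}$'', we obtain $a j_s = \pm j_{\alpha(s)}$ for every $s \in \bZ_t$. The hypotheses of Proposition~\ref{propn:change_layers} are therefore satisfied with this very bijection $\alpha$, so $\sigma_a$ is a well-defined automorphism of $X$ with $\sigma_a(L_s) = L_{\alpha(s)}$.

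The decisive step is to form $\sigma_a^{-1} \varphi'$ and check that it preserves every spoke --- both factors send $S_v$ to $S_{av}$, since $\sigma_a$ acts as $(s,v) \mapsto (\alpha(s), av)$ and $\varphi'$ descends to multiplication by $a$ on $Y$ --- and also every layer, because both factors send $L_s$ to $L_{\alpha(s)}$. Any automorphism of $X$ fixing every spoke and every layer set-wise must send $(s,v)$ to the unique element of $L_s \cap S_v$, namely $(s,v)$ itself, and is therefore the identity. Hence $\varphi' = \sigma_a$, and unwinding the dihedral factor exhibits $\varphi$ as a product of powers of $\rho$, $\tau$, and $\sigma_a$.

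The one subtlety --- the point I would treat most carefully --- is the freedom in choosing the bijection $\alpha$ within Proposition~\ref{propn:change_layers}: it is precisely this flexibility that lets us take $\alpha$ equal to the layer permutation induced by $\varphi'$ itself, collapsing $\sigma_a^{-1} \varphi'$ to the identity. Had we instead insisted on the canonical bijection described in the Remarks after Proposition~\ref{propn:change_layers}, the leftover automorphism would be a spoke-preserving permutation of equal-$j$ layers, which Corollary~\ref{cor:exchange_layers} expresses as a product of the maps $\lambda_{s_1,s_2}$ --- exactly the reason these $\lambda$'s are named alongside $\sigma_a$ in the statement.
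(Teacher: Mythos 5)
Your argument is correct, and it fills in a proof that the paper itself leaves implicit (the corollary is presented only as ``a special case'' of Corollary~\ref{cor:preserve_layercycles}). The two routes share the same skeleton --- reduce by a dihedral element to fix the spoke $S_0$, project to the circulant graph $Y$, invoke Theorem~\ref{thm:joymorris} to extract the unit $a$ --- but they diverge at the endgame. The paper's intended derivation is to apply Corollary~\ref{cor:preserve_layercycles} wholesale, obtaining a residual spoke-preserving factor that is a product of the cycle-mixing maps $\lambda_{i,s_1,s_2}$, and then to use the layer-preservation hypothesis to force those partial exchanges to assemble into the whole-layer swaps $\lambda_{s_1,s_2}$ of Corollary~\ref{cor:exchange_layers}. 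You instead exploit the non-uniqueness of the bijection $\alpha$ in Proposition~\ref{propn:change_layers}: by taking $\alpha$ to be the layer permutation induced by $\varphi'$ itself (which is legitimate, since the relation $aj_s = \pm j_{\alpha(s)}$ does follow from comparing the action on $Y$ with the action on layers), you arrange that $\sigma_a^{-1}\varphi'$ preserves every spoke and every layer, hence fixes each singleton $L_s \cap S_v$ and is the identity. This buys a slightly sharper conclusion --- every layer-preserving automorphism is a dihedral element times a \emph{single} $\sigma_a$, for a suitable choice of $\alpha$ --- at the cost of making $\sigma_a$ depend on that choice; your closing paragraph correctly identifies that if one insists on the canonical $\alpha$ of the Remarks, the discrepancy is exactly a product of the $\lambda_{s_1,s_2}$, which is why they appear in the statement. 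Two small merits of your write-up worth keeping: the observation that ``takes layers to layers'' already implies respecting the fundamental edge-partition (an edge with both endpoints in one layer must be a layer edge), and the explicit verification that left-multiplying by elements of $\langle \rho,\tau\rangle$ does not change the induced layer permutation.
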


%%%%%%%%%%%%%%%%%%%%%%%%
\section{Automorphism groups of $GI$-graphs}
\label{automgps}
%%%%%%%%%%%%%%%%%%%%%%%%

Now that we know all possible automorphisms of a $GI$-graph, it is not difficult 
to determine their number, and construct the automorphism groups in many cases. 
We will sometimes use $F(n;J)$ to denote the number of automorphisms of $GI(n;J)$, 
and $A(n;J)$ to denote the automorphism group $GI(n;J)$.

The automorphism group $A(n;J)$ of $GI(n;J)$ always contains a dihedral subgroup 
of order $2n$, generated by the rotation $\rho$ and the reflection $\tau$,
defined in ($\dagger$) in the previous section (before Proposition~\ref{propn:mix_layers}). 
Note that the relations $\rho^n = \tau^2 = (\rho\tau)^2 = 1$ hold, with the third of 
these being equivalent to $\tau\rho\tau = \rho^{-1}$. 

We split the consideration of $F(n;J)$ and $A(n;J)$ into four cases, below. 

%%%%%%%%%%%%%%%%%%
\subsection{The disconnected case} 
\label{subs:disconnected}
%%%%%%%%%%%%%%%%%%
Let $d = \gcd(n,J)$. Then $GI(n;J)$ is the disjoint union of $d$ isomorphic 
copies of $GI(n;J/d)$. 
This reduces the computation of $\Aut(X)$ to the case of connected $GI$-graphs.  
In particular, %by \cite{Frucht0} 
we have 
\begin{equation*}   \label{eqn:autdis}
%\Aut(X) = S_n[\Aut(Y)],
A(n;J) \cong A(n,J/d) \wr \Sym (d) \\[+2pt] 
\end{equation*}
so $\Aut(GI(n;J))$ is the wreath product of $\Aut(GI(n;J/d))$ by the symmetric group $\Sym(d)$ of degree $d$, 
and therefore  
\begin{equation*}   \label{eqn:numdis}
F(n,J) = |\!\Aut(GI(n;J))| = d! \,(F(n,J/d))^d.
\end{equation*}

%%%%%%%%%%%%%%%%%%%
\subsection{The edge-transitive case} 
\label{subs:ET}
%%%%%%%%%%%%%%%%%%%
The eight connected edge-transitive $GI$-graphs were given in Theorem~\ref{theorem:skewautom}. 
Seven of them are generalized Petersen graphs, with $J = \{1,k\}$ for some $k \in \bZ_n^{\, *}$, 
and their automorphism groups are known --- see \cite{Frucht} or \cite{Lovrecic} for example. 

For each of these seven graphs, all of which are cubic, there is an automorphism $\mu$ of order $3$ 
that fixes the vertex $(0,0)$ and induces a $3$-cycle on it neighbours $(1,0)$, $(0,1)$ and $(0,n-1)$. 
In particular, this automorphism $\mu$ takes the spoke edge $\{(0,0),(1,0)\}$ to the layer edge $\{(0,0),(0,1)\}$, 
and its effect on the other vertices is easily determined. 

In the cases $(n,k) = (4,1)$, $(8,3)$, $(12,5)$ and $(24,5)$, where $n \equiv 0$ mod $4$ 
and $k^2 \equiv 1$ mod $n$, the three automorphisms $\rho$, $\tau$ and $\mu$ generate $A(n;J)$ 
and satisfy the defining relations 
$$
\rho^n = \tau^2 = \mu^3 = (\rho\tau)^2 = (\rho\mu)^2 = (\tau\mu)^2 = [\rho^4,\mu] = 1
$$
for a group of order $12n$ which we may denote for the time being as $\Gamma(n,k)$, 
although strictly speaking, the second parameter $k$ is not necessary. 

Similarly in the case $(n,k) = (10,2)$, the three automorphisms $\rho$, $\tau$ and $\mu$ 
generate $A(n;J)$, which has order $12n$, 
but they satisfy different defining relations, with the relation  $[\rho^4,\mu] = 1$ 
replaced by $\mu\rho^{-1}\mu\rho^{2}\mu^{-1}\rho^{2}\tau = 1$.
In the other two cases (namely $(n,k) = (5,2)$ and $(10,3)$), the 
automorphisms $\rho$, $\tau$ and $\mu$ generate a subgroup of index $2$ in $A(n;J)$, 
which has order $24n$.

In summary, the automorphism groups of the eight connected edge-transitive $GI$-graphs and 
their orders can be described as below: 
\begin{equation*}   \label{eqn:autet}
\begin{array}{rclccrcl}   
\Aut(GI(4,1,1))& \hskip -6pt \cong \hskip -3pt \!&\Gamma(4,1) \ \cong \ S_4 \times \bZ_2 & \ \ & \ \ & F(4,1,1)& \hskip -6pt = \hskip -4pt &48 \\[+2pt] 
\Aut(GI(5,1,2))& \hskip -6pt \cong \hskip -3pt \!&S_5 & \ \ & \ \ & F(5,1,2)& \hskip -6pt = \hskip -4pt &120  \\[+2pt] 
\Aut(GI(8,1,3))& \hskip -6pt \cong \hskip -3pt \!&\Gamma(8,3) & \ \ & \ \ & F(8,1,3)& \hskip -6pt = \hskip -4pt &96 \\[+2pt] 
\Aut(GI(10,1,2))& \hskip -6pt \cong \hskip -3pt \!&A_5 \times \bZ_2 & \ \ & \ \ & F(10,1,2)& \hskip -6pt = \hskip -4pt &120 \\[+2pt] 
\Aut(GI(10,1,3))& \hskip -6pt \cong \hskip -3pt \!&S_5 \times \bZ_2 & \ \ & \ \ & F(10,1,3)& \hskip -6pt = \hskip -4pt &240 \\[+2pt] 
\Aut(GI(12,1,5))& \hskip -6pt \cong \hskip -3pt \!&\Gamma(12,5) & \ \ & \ \ & F(12,1,5)& \hskip -6pt = \hskip -4pt &144 \\[+2pt] 
\Aut(GI(24,1,5))& \hskip -6pt \cong \hskip -3pt \!&\Gamma(24,5)& \ \ & \ \ & F(24,1,5)& \hskip -6pt = \hskip -4pt &288 \\[+2pt] 
\Aut(GI(3,1,1,1))& \hskip -6pt \cong \hskip -3pt \!&(D_6 \times D_6) \rtimes \bZ_2 & \ \ & \ \ & F(3,1,1,1)& \hskip -6pt = \hskip -4pt &72.
\end{array}
\end{equation*}
See \cite{Frucht} and/or \cite{Lovrecic} for further details.

%%%%%%%%%%%%%%%%%%%%%%%%%%%%%%%
\subsection{The case where $J$ is a set (with no repetitions)}
\label{subs:set}
%%%%%%%%%%%%%%%%%%%%%%%%%%%%%%%
Suppose $J$ is a set (and not a multiset), in standard form, and let $X = GI(n;J)$. 
If $X$ is not connected, then sub-section~\ref{subs:disconnected} applies, 
while if $X$ is connected and edge-transitive, then sub-section~\ref{subs:ET} applies, 
so we will suppose that $X$ is connected but not edge-transitive.

Then by Corollary  \ref{cor:preserve_layers}, we know that the automorphism group of $X$ 
is generated by the automorphisms $\rho$, $\tau$ and the set $\{\sigma_a : \, a \in A\}$, 
where
$$
A = \{\,a \in Z_n^{\,*}\ | \ a(J\cup-J) = J\cup -J\, \}.
$$
It is easy to see that $A$ is a subgroup of $\bZ_n^{\,*}$. 
Indeed since $\sigma_1$ is trivial, $\sigma_{-1}=\tau$, 
and $\sigma_a \sigma_b=\sigma_{ab}$ for all $a,b \in A$, 
the set $S = \{\sigma_a : \, a \in A\}$ is a subgroup of $\Aut(X)$, isomorphic to $A$. 
In particular, $S$ is abelian. 
It is also easy to see that if composition of functions is read from left to right, 
and $\alpha$ is the bijection satisfying $j_{\alpha(s)} = \pm a j_s$ for all $s \in \bZ_t$, 
then 
$$(\rho \sigma_a)(s,v) = \sigma_a(s,v+1) = (\alpha(s),a(v+1)) = (\alpha(s),av+a) = \rho^a(\alpha(s),av) 
 = (\sigma_a \rho^a)(s,v)$$ 
for every vertex $(s,v)$, and so $\rho \sigma_a = \sigma_a \rho^a$ for all $a \in A$. 
Rearranging, we have $\sigma_a^{-1}\rho \sigma_a = \rho^a$ for all $a \in A$, 
which shows that every element of $S$ normalizes the cyclic subgroup of order $n$ 
generated by the rotation $\rho$.  
Finally, again since $\tau = \sigma_{-1} \in S$, this implies that the automorphism group 
of $X = GI(n;J)$ is a semi-direct product: 
$$
A(n;J) \ = \ \langle\, \{\rho\} \cup S \,\rangle \ \cong \ \langle \rho\rangle \rtimes S \ \cong \ C_n \rtimes A, 
\quad \hbox{ of order } \, F(n;J) = n|A|. 
$$
 
%%%%%%%%%%%%%%%
\subsection{The general case}
\label{subs:general}
%%%%%%%%%%%%%%%
In this sub-section we deal with all remaining possibilities, in which $J$ is a multiset 
with repeated elements, in standard form, and $X = GI(n;J)$ is connected but not edge-transitive.
Here we need two new sets of parameters, namely the multiplicity $m_j$ in $J$ of each element $j$ 
from the underlying set of $J$ (that is, the number of $s \in \bZ_t$ for which $j_s = j$), 
and $d_j = \gcd(n,j)$ for all such $j$. 

Also we need the set $B$ of all $a \in Z_n^{\,*}$ with the property that 
$aJ =\{\pm j_0, \pm j_1, \dots, \pm j_{t-1}\}$.  
Note that this is always a subgroup of $Z_n^{\,*}$, but is not always the same as the 
subgroup $A = \{\,a \in Z_n^{\,*}\ | \ a(J\cup-J) = J\cup -J\, \}$ 
that we took in the previous sub-section, since the multiplicities of $j$ and $\pm aj$ in $J$ 
might not be the same for some $a \in A$, but clearly they must be the same for every $a \in B$. 

Now by Corollary~\ref{cor:preserve_layercycles} we know that the automorphism 
group of $X$ is generated by the automorphisms $\rho$ and $\tau$, 
the automorphisms $\sigma_a$ for $a \in B$ (as defined 
in Proposition~\ref{propn:change_layers}), and the automorphisms 
$\lambda_{i,s,s^\prime}$ (as defined in Proposition~\ref{propn:mix_layers}) that mix cycles.

Just as in the previous case, the set  $S = \{\sigma_a : \, a \in B\}$ is a subgroup 
of $\Aut(X)$, isomorphic to the subgroup $B$ of $Z_n^{\,*}$. 
Again also we have $\sigma_a^{-1}\rho \sigma_a = \rho^a$ for all $a \in B$, 
and so every element of $S$ normalizes the cyclic subgroup of order $n$ 
generated by the rotation $\rho$.  

Next, for each $j \in J$, define $\Omega_j = \{ s \in \bZ_t \ | \ j_s = j\,\}$, 
which is a set of size $m_j$, and for the time being, let $d = d_j = \gcd(n,j)$. 
Also define  $\Omega_{ji} = \{ (s,v) \in V(X) \ | \ s \in \Omega_j, \ v \equiv i \ {\rm mod} \ d \,\}$, 
for $j \in J$ and $i \in \bZ_d$ (where $d = \gcd(n,j)$). 
Note that $|\Omega_{ji}| = m_{j} n/d$, because $\Omega_{ji}$ is like a strip 
of vertices across $m_j$ layers of $X$, containing the $n/d$ vertices of one cycle 
from each of these layers. 

By Proposition~\ref{propn:mix_layers}, for every two distinct $s_1,s_2$ in $\Omega_j$ 
and every $i \in \bZ_d$, 
there exists an involutory automorphism $\lambda_{i,s_1,s_2}$ that exchanges one of the 
$d$ cycles from layer $L_{s_1}$ with the corresponding cycle from layer $L_{s_2}$, 
and preserves every spoke. 
This automorphism induces a transposition on the set of $m_j$ layer cycles 
containing the vertices of the set $\Omega_{ji}$.  If we let the pair $\{s_1,s_2\}$ vary, 
we get all such transpositions, and hence for fixed $i \in \bZ_d$, the automorphisms 
$\lambda_{i,s_1,s_2}$ with $s_1,s_2$ in $\Omega_j$ generate a subgroup isomorphic 
to the symmetric group $\Sym(m_j)$, acting with $n/d$ orbits of length $m_j$ on $\Omega_{ji}$ 
and fixing all other vertices. 

Moreover, for any two distinct $i_1,i_2$ in $\bZ_d$, the elements of $T_{i_1}$ and $T_{i_2}$ 
move disjoint sets of vertices (namely $\Omega_{ji_1}$ and $\Omega_{ji_2}$), and hence 
commute with each other.  Hence the subgroup $T_j$ generated by all of the automorphisms 
$\lambda_{i,s_1,s_2}$ with $s_1,s_2$ in $\Omega_j$ is isomorphic to the direct 
product of $d$ copies of $\Sym(m_j)$, one for each value of $i$ in $\bZ_d$. 

Similarly, for any two distinct $j,j'$ in $J$, the corresponding subgroups $T_{j}$ 
and $T_{j'}$ move disjoint sets of vertices (from disjoint sets of layers of $X$), 
and hence commute with each other, so the subgroup $N$ generated by the set of 
all of the automorphisms $\lambda_{i,s_1,s_2}$ is a direct product 
$\Pi_{j \in J\,} T_j \cong \Pi_{j \in J\,} (S_{m_j})^{d_{j}}$, of order $\Pi_{j \in J\,} ({m_j}!)^{d_{j}}$. 

On the other hand, for fixed $s_1$ and $s_2$ in $\Omega_j$, then 
$$
\rho^{-1}{\lambda_{i,s_1,s_2\,}}{\rho} = \lambda_{i+1,s_1,s_2} 
\quad \ \hbox{ and } \ \quad 
\tau^{-1}{\lambda_{i,s_1,s_2\,}}{\tau} = \lambda_{-i,s_1,s_2} 
\quad \ \hbox{ for all } \, i \in \bZ_d,  
$$ 
so the automorphisms $\lambda_{i,s_1,s_2}$ are permuted among themselves in 
a cycle under conjugation by the rotation $\rho$, 
and fixed or interchanged in pairs under conjugation by the reflection $\tau$. 

Finally if $a \in B\setminus \{\pm 1\}$, 
and $j'$ is the element of (the underlying set of) $J$ congruent to $\pm aj$ mod $n$,
then the automorphism $\sigma_a$ defined in Proposition~\ref{propn:change_layers} 
takes the layers $L_s$ for $s \in \Omega_j$ to the layers $L_{s'}$ for $s' \in \Omega_{j'}$, 
and conjugates the subgroup $T_j$ (generated by those $\lambda_{i,s_1,s_2}$ 
with $s_1,s_2$ in $\Omega_j$) to the corresponding subgroup $T_{j'}$. 
Hence $\sigma_a$ normalises the subgroup $N = \Pi_{j \in J\,} T_j$. 

Thus $N$ is normalised by $\rho$ and $\tau$ ($=\sigma_{-1}$) and all the other $\sigma_a$, 
and is therefore normal in $\Aut(X)$.  It follows that 
$$
A(n;J) \ = \ \langle\, N \cup \{\rho\} \cup S \,\rangle \ \cong \ N \rtimes \langle \rho\rangle \rtimes S 
\ \cong \ \prod_{j \in J\,} (S_{m_j})^{d_{j}} \rtimes C_n \rtimes B, 
$$
of order \hskip 4cm 
${\displaystyle F(n;J) = n\,|B|\prod_{j \in J} (m_{j}!)^{d_{j}}}, $ \\[+12pt] 
where the products are taken over all $j$ from the underlying set of $J$, 
without multiplicities. 
 
%%%%%%%%%%%%%%%
\subsection{Summary}
\label{subs:summary}
%%%%%%%%%%%%%%%

Combining the results from the four sub-sections above gives an algorithm 
for computing the automorphism groups of $GI$-graphs and their automorphism 
groups in general.

%Maybe we can also compute the number of automorphisms for the
%related family of super-generalized Petersen graphs. 
%There will be more vertex-transitive graphs,
%but essentially we may repeat the same line of arguments to compute the
%automorphisms. Instead of using the full automorphism group we have to use 
%the dihedral group.

%%%%%%%%%%%%%%%%%%%
\section{Vertex-transitive $GI$-graphs}
\label{vertextrans} 
%%%%%%%%%%%%%%%%%%%

In this section we consider further symmetry properties of $GI$-graphs.
By Corollary \ref{corollary:edgetransitive}, we know there are only eight different 
connected edge-transitive $GI$-graphs $GI(n;J)$ having two or more layers. 
In particular, there are no such graphs with four or more layers. 
In contrast, we will show that there are several vertex-transitive $GI$-graphs, 
by giving a classification of them. 

Note that the graph $GI(n;J)$ will be vertex-transitive if we are able to permute 
the layers of $GI(n;J)$ transitively among themselves. 
Now for each non-zero $a \in \bZ_n$, consider multiplication of the (multi)set $J$ by $a$. 
If this preserves $J$ (as a multiset), then it gives a bijection from $J$ to $J$, and so 
by Proposition~\ref{propn:change_layers}, an automorphism $\sigma_a$ of $GI(n;J)$, permuting the layers. 
The graph $GI(n;J)$ will be vertex-transitive if the group generated by all 
such $\sigma_a$ acts transitively on the layers.

%%%%%%%%%%%
\begin{Theorem}    
\label{thm:vtsubgroup}
%%%%%%%%%%%
Let $J$ be any subset of $\bZ_n^{\,*}$ with the two properties that {\em (a)} $J \cap  -J = \emptyset$,   
and  {\em (b)} $J \cup -J$ is a $($multiplicative$)$ subgroup of $\bZ_n^{\,*}$.
Then $GI(n;J)$ is vertex-transitive. 
\end{Theorem}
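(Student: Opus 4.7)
The plan is to exhibit, for each target vertex $(s,w)$, an automorphism sending $(0,0)$ to $(s,w)$ by composing a rotation within a layer with a layer-changing automorphism $\sigma_a$ from Proposition~\ref{propn:change_layers}. Throughout, write $G = J \cup -J$. By hypothesis (b), $G$ is a subgroup of $\bZ_n^{\,*}$, and since $J \cap -J = \emptyset$ by (a), we have $|G| = 2|J| = 2t$, and every element of $G$ lies in exactly one of $J$ or $-J$.

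First I would verify that for every $a \in G$, the automorphism $\sigma_a$ is well-defined. Since $G$ is a subgroup, $aG = G$, that is, $a(J \cup -J) = J \cup -J$, which is exactly the hypothesis of Proposition~\ref{propn:change_layers}. To produce the required bijection $\alpha : \bZ_t \to \bZ_t$ with $j_{\alpha(s)} = \pm a j_s$, note that for each $s \in \bZ_t$ the element $a j_s$ lies in $G = J \cup -J$, so exactly one of $a j_s$ or $-a j_s$ lies in $J$; let $\alpha(s)$ be its unique index in $J$. Injectivity of $\alpha$ follows because $J \cap -J = \emptyset$ forces $j_{s_1} = \pm j_{s_2}$ to imply $j_{s_1} = j_{s_2}$, and $J$ has no repeated entries.

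Next I would use these automorphisms together with $\rho$ to move $(0,0)$ to any prescribed vertex $(s,w)$. Given $s \in \bZ_t$, take $a = j_s j_0^{-1}$, which lies in $G$ since $j_0, j_s \in J \subseteq G$ and $G$ is a group. Then $a j_0 = j_s \in J$, so $\alpha(0) = s$ for this choice, and hence
\[
\sigma_a(0,0) = (\alpha(0), a \cdot 0) = (s, 0).
\]
Applying the rotation $\rho^w$ afterwards sends $(s,0)$ to $(s,w)$. Thus $\rho^w \sigma_a$ maps $(0,0)$ to $(s,w)$, proving that $\Aut(GI(n;J))$ acts transitively on $V(X)$.

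The only subtle point is the well-definedness of $\alpha$ in the first step: it is hypothesis (a), $J \cap -J = \emptyset$, that prevents the sign ambiguity in $j_{\alpha(s)} = \pm a j_s$ from producing collisions and thereby ensures $\alpha$ is a genuine bijection. Once $\sigma_a$ is available for every $a \in G$, the rest is a direct computation using the group law in $G$.
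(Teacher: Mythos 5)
Your proof is correct and follows essentially the same route as the paper's: use the subgroup property of $J\cup -J$ to obtain layer-permuting automorphisms $\sigma_a$ that act transitively on layers, then compose with powers of $\rho$ to reach every vertex. Your version is slightly more explicit (choosing $a = j_s j_0^{-1}$ and verifying the well-definedness of $\alpha$ from hypothesis (a)), but the underlying argument is the same.
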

\begin{proof}
Since $J \cup -J$ is a subgroup of $\bZ_n^{\,*}$ (not containing $0$), multiplication by any $a \in J$ 
gives a bijection from $J$ to $J$ and hence an automorphism $\sigma_a$ of $GI(n;J)$. 
Moreover, for any $a,b \in J$ there exists $c \in J$ such that $ac=\pm b$ in $\bZ_n$, 
and in this case, the automorphism $\sigma_c$ takes any layer $s$ with $j_s = a$ 
to a layer $s'$ with $j_{s'} = \pm b$. 
It follows that the group generated by $\{\,\sigma_a\! : a\in J \,\}$ acts transitively 
on the layers of $GI(n;J)$, and hence that the group generated by $\{\rho\} \cup \{\,\sigma_a\! : a\in J \,\}$ 
acts transitively on the vertices of $GI(n;J)$. 
% Similar arguments show that also $GI(n;J \cup -J)$ is vertex-transitive.
\end{proof}

%%%%%%%%%
\begin{Corollary}
\label{corollary:VT}
%%%%%%%%%
%For any unit $r$ in $\bZ_n$, let $t$ be the smallest positive integer
%such that $r^t = \pm 1$ in $\bZ_n$. 
%Then $GI(n;1,r,r^2, \ldots, r^{t-1})$ is vertex-transitive.
Let $A$ be any subgroup of the multiplicative group $\bZ_n^{\,*}$ 
containing an element of $\bZ_n\setminus \{\pm 1\}$. 
If $-1 \in A$, then take $J = A \cap \{1,2,\dots, \lfloor \frac{n-1}{2} \rfloor \}$ 
$($so that $A = J \cup -J)$, 
while if $-1 \not\in J$, let $J = A$.  
Then $GI(n;J)$ is vertex-transitive. 
Hence for every integer $n > 6$, there exists at least one vertex-transitive $GI$-graph 
of the form $GI(n;J)$ for some $J$ with $|J| > 1$.  
\end{Corollary}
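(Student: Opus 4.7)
The plan is to apply Theorem~\ref{thm:vtsubgroup} in each of the two cases, by verifying its hypotheses (a) $J \cap -J = \emptyset$ and (b) $J \cup -J$ is a subgroup of $\bZ_n^{\,*}$. Both checks are just unwindings of the definitions.

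First, in the case $-1 \in A$, the set $J = A \cap \{1,2,\ldots,\lfloor (n-1)/2\rfloor\}$ picks exactly one representative from each pair $\{a,-a\} \subseteq A$, so (a) is immediate, and $J \cup -J = A$ (since $-1 \in A$ makes $A$ closed under negation, and every element of $\bZ_n^{\,*}$ lies in exactly one of the two halves), which is a subgroup by assumption. In the case $-1 \notin A$, the set $J = A$ is itself a subgroup: if $a$ and $-a$ both belonged to $A$ then $-1 = (-a)a^{-1}$ would too, contradicting $-1 \notin A$, so (a) holds; and $J \cup -J = A \cup (-1)A = \langle A, -1 \rangle$, a subgroup of $\bZ_n^{\,*}$. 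This yields vertex-transitivity via Theorem~\ref{thm:vtsubgroup}.

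For the final existence statement, I would use the elementary fact that $\varphi(n) \geq 4$ whenever $n > 6$ (the exceptional values with $\varphi(n) \leq 2$ are $1,2,3,4,6$), so $\bZ_n^{\,*}$ itself contains some $a \notin \{\pm 1\}$; taking $A = \bZ_n^{\,*}$ then satisfies the hypothesis. A quick count confirms $|J| > 1$: in Case 1 the elements $\pm 1, \pm a$ all lie in $A$, so both $1$ and the lower-half representative of $\{a,-a\}$ lie in $J$; in Case 2 both $1$ and $a$ lie in $J = A$. The defining restrictions $j \neq 0, n/2$ for a $GI$-graph are automatic, since $J \subseteq \bZ_n^{\,*}$ forces $j \neq 0$, and $\gcd(n/2,n) \neq 1$ for $n \geq 3$ forces $n/2 \notin \bZ_n^{\,*}$.

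I do not anticipate any real obstacle here, as Theorem~\ref{thm:vtsubgroup} already does all the structural work. The only point worth flagging is the cardinality bookkeeping in the ``moreover'' clause: one needs strictly $|J| > 1$, not just $|J| \geq 1$, and this is handled cleanly by choosing $A$ as large as possible (e.g.\ all of $\bZ_n^{\,*}$) so that the hypothesis $a \in A \setminus \{\pm 1\}$ immediately forces $|A| \geq 2$ in Case 2 and $|A| \geq 4$ in Case 1.
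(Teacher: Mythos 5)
Your proposal is correct and is exactly the argument the paper intends: the corollary is stated without proof as an immediate application of Theorem~\ref{thm:vtsubgroup}, and your verification of hypotheses (a) and (b) in the two cases ($-1\in A$ versus $-1\notin A$), together with the observation that $\varphi(n)\ge 4$ for $n>6$ so that $A=\bZ_n^{\,*}$ works and gives $|J|=\varphi(n)/2\ge 2$, is precisely the intended reasoning. The only cosmetic remark is that with the choice $A=\bZ_n^{\,*}$ one is always in the case $-1\in A$, so your Case~2 count in the final paragraph is not needed for the existence claim.
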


Note that the above requires $\phi(n) = |\bZ_n^{\,*}|$ to be at least $4$, 
so that $n > 4$ and $n \ne 6$, in order for there to be at least two layers. 
A sub-family consists of those for which $A$ is the cyclic 
subgroup $\{1,r,r^2, \ldots, r^{t-1})$ generated by the powers of a single 
unit $r \in \bZ_n^{\,*}\setminus \{\pm 1\}$. 
An example is given in Figure~\ref{fig:twoGIgraphs}(b), with $n = 7$ and $r = 2$ 
(and $2^2 \equiv 4 \equiv -3$ mod $7$). 

\begin{figure}[htb]
\centering
\begin{minipage}[b]{0.45\textwidth}
  \centering
  \includegraphics[width=0.7\textwidth]{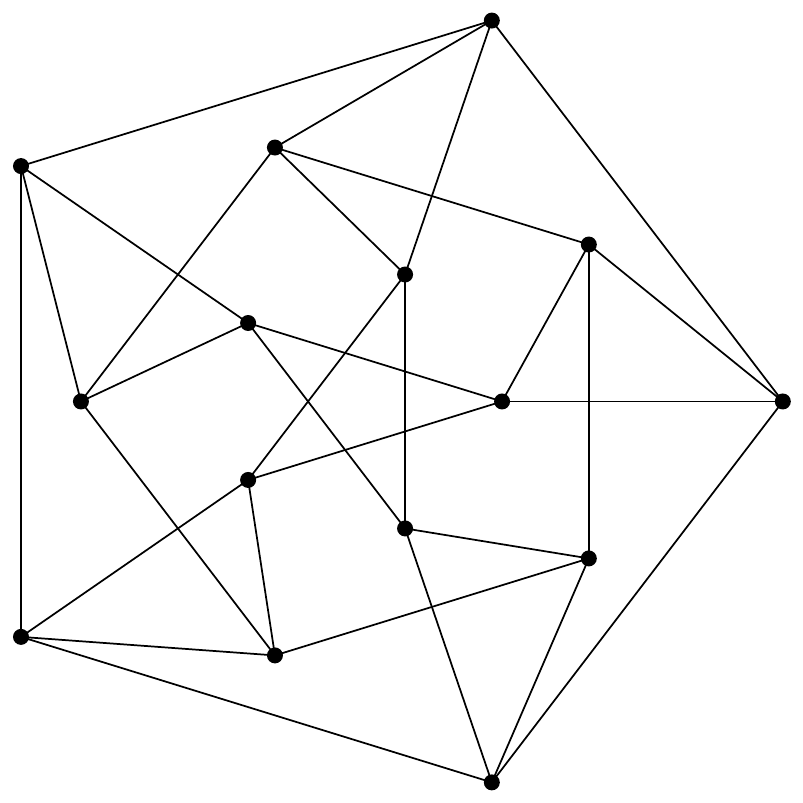}\\ (a)
\end{minipage}
\hfill
\begin{minipage}[b]{0.45\textwidth}
  \centering
  \includegraphics[width=0.7\textwidth]{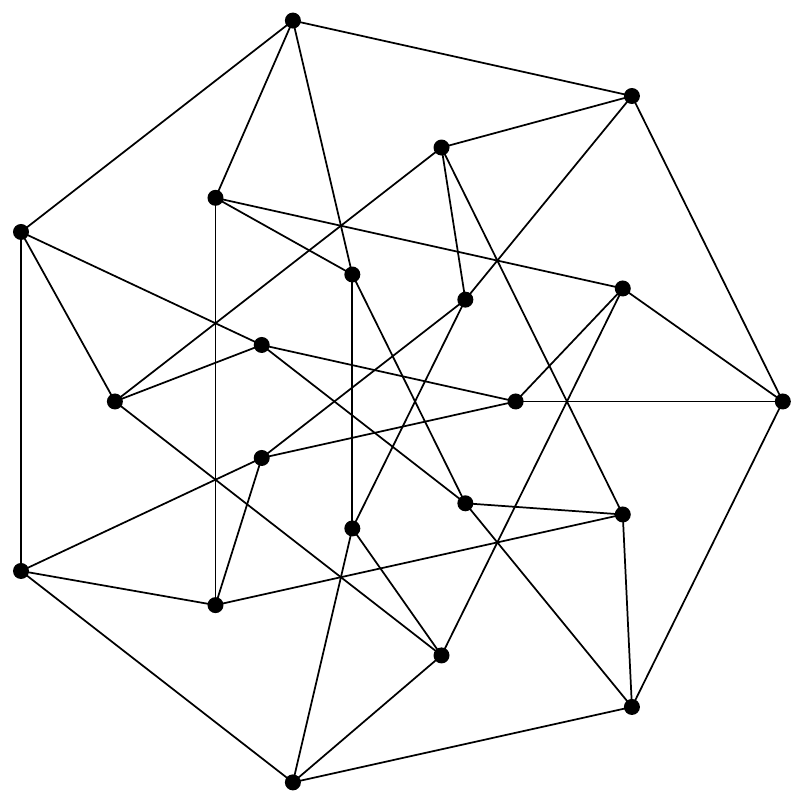}\\ (b)
\end{minipage}
\caption{The graph $GI(5;1,1,2)$ in (a) has 5-cycles as its three layers but is not
vertex-transitive, while the graph $GI(7;1,2,3)$ in (b) is vertex-transitive and has two edge
orbits.} \label{fig:twoGIgraphs}
\end{figure}

Next, we say that a subset  $J=\{j_0,j_1, \dots, j_{t-1}\} $ of $\bZ_n$ is  \emph{primitive} 
if $1 \in J$ and $j_i \ne \pm j_k$ whenever $i \ne k$. 
Also we say that the graph $GI(n;J)$ is \emph{primitive} if $J$ is a primitive subset of $\bZ_n$. 
Note that any such graph is connected, since $1 \in J$.

%%%%%%%%%
\begin{Theorem}
\label{thm:primVT}
%%%%%%%%%
A primitive $GI$-graph $GI(n;J)$ is vertex-transitive if and only if
either $J \,\cup \,-J$ is a $($multiplicative$)$ subgroup of $\bZ_n^{\,*}$, 
or $n=10$ and $J=\{1,2\}$.
\end{Theorem}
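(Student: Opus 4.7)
My plan is to prove both directions. For the forward direction, there are two cases. If $J \cup -J$ is a multiplicative subgroup of $\bZ_n^{\,*}$, I would observe that primitivity of $J$ (together with the standing hypothesis $j_k \ne n/2$) gives $J \cap -J = \emptyset$, so the hypotheses of Theorem~\ref{thm:vtsubgroup} are satisfied and vertex-transitivity follows directly. If instead $n = 10$ and $J = \{1,2\}$, then $GI(10;1,2)$ is the dodecahedral graph $G(10,2)$, which is arc-transitive (hence vertex-transitive), as listed in Theorem~\ref{theorem:skewautom}.

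For the reverse direction, I would assume $X = GI(n;J)$ is primitive and vertex-transitive, and split the argument according to whether $X$ admits a skew automorphism. If $X$ has a skew automorphism, Theorem~\ref{theorem:skewautom} forces $X$ to be one of the eight special graphs listed there; I would discard the two non-primitive ones, $GI(4;1,1)$ and $GI(3;1,1,1)$, and then compute $J \cup -J$ for each of the six remaining graphs. Direct inspection shows that $GI(5;1,2)$, $GI(8;1,3)$, $GI(10;1,3)$, $GI(12;1,5)$, and $GI(24;1,5)$ all have $J \cup -J$ equal to a subgroup of $\bZ_n^{\,*}$, while $GI(10;1,2)$ has $J \cup -J = \{1,2,8,9\} \not\subseteq \bZ_{10}^{\,*}$, yielding exactly the exceptional case of the statement.

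In the remaining sub-case, every automorphism of $X$ respects the fundamental edge-partition. By Corollary~\ref{cor:preserve_layercycles}, combined with primitivity of $J$ (which rules out the $\lambda$-automorphisms of Proposition~\ref{propn:mix_layers}), every automorphism of $X$ is a product of powers of $\rho$, $\tau$, and the automorphisms $\sigma_a$ for $a$ in the stabiliser $A = \{\,a \in \bZ_n^{\,*} : a(J \cup -J) = J \cup -J\,\}$. Since $\rho$ and $\tau$ each preserve every layer, vertex-transitivity forces the $\sigma_a$'s to act transitively on the layers of $X$. Using that $1 = j_0 \in J$, this transitivity demands that for every $j \in J$ there exists $a \in A$ with $\pm a \cdot 1 = j$, equivalently $\pm j \in A$, giving $J \cup -J \subseteq A$. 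Conversely, for each $a \in A$ we have $a = a \cdot 1 \in a \cdot (J \cup -J) = J \cup -J$, so $A \subseteq J \cup -J$. Hence $A = J \cup -J$, which is a subgroup of $\bZ_n^{\,*}$, as required.

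The main obstacle I anticipate is this last sub-case: one must carefully justify that vertex-transitivity of $\Aut(X)$ passes to transitivity of the $\sigma_a$-action on layers, which is exactly where primitivity (eliminating the $\lambda$'s) and $1 \in J$ (pinning down the reference layer $L_0$) both enter. The skew-automorphism case is essentially routine but requires the slightly tedious bookkeeping of checking all eight graphs from Theorem~\ref{theorem:skewautom} and recognising $G(10,2)$ as the unique primitive exception.
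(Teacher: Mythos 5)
Your proof is correct and follows essentially the same route as the paper: the forward direction via Theorem~\ref{thm:vtsubgroup} and the known transitivity of $G(10,2)$, and the converse by splitting on skew automorphisms, invoking Theorem~\ref{theorem:skewautom} in the skew case and using primitivity to eliminate the $\lambda$-automorphisms so that layer-transitivity is carried by the $\sigma_a$ alone. Your concluding identity $A = J \cup -J$ is just a slightly tidier packaging of the paper's observation that $x(J\cup -J)=J\cup -J$ for every $x\in J$.
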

\begin{proof}
First, it was shown in \cite{Frucht} that $GI(10;1,2)$ is vertex-transitive. 
Also by Theorem \ref{thm:vtsubgroup}, we know that $GI(n;J)$ is 
vertex-transitive when $J \cup -J$ is a subgroup of $\bZ_n^{\,*}$. 

Conversely, suppose that $X=GI(n;J)$ is a primitive vertex-transitive 
$GI$-graph, other than $GI(10;1,2)$. 
We have to show that $J \cup -J$ is a subgroup of $\bZ_n^{\,*}$. 

Since $X$ is primitive, we have $1 \in J$, 
and without loss of generality we may assume that $j_0=1$.
By Theorem~\ref{theorem:skewautom}, we know that if $X$ has a skew automorphism, 
then either $t = 2$ and $(n,j_1) = (4,1)$, $(5,2)$, $(8,3)$, $(10, 2)$, $(10, 3)$, $(12, 5)$ 
or $G(24, 5)$, or  $t = 3$ and $(n,j_1,j_2)= (3,1,1)$. 
It is easy to see that $J \cup -J$ is a subgroup of $\bZ_n^{\,*}$ in all of these 
cases except $(n,t,j_0,j_1) = (10,2,1, 2)$.  Hence we may assume 
that $X$ has no skew automorphism, and therefore every automorphism of $X$ 
preserves the fundamental edge-partition.  

Now because $X$ is vertex-transitive, and the layer $L_0$ is a single $n$-cycle, 
it follows that all the layers of $X$ must be cycles, and so every element of $J$ must be 
coprime to $n$, and therefore a unit mod $n$. 
In particular, there are no automorphisms that `mix' cycles from different layers.
In fact, since $J$ is primitive, $J \cup -J$ contains $2t$ distinct elements, 
and it follows that $X$ has no automorphisms of the form given in 
Proposition~\ref{propn:mix_layers} or Corollary~\ref{cor:exchange_layers}. 

Hence (by Theorem~\ref{thm:joymorris}) the only automorphisms that preserve the spoke
$S_0$ are the automorphisms $\sigma_a$ given in Corollary  \ref{cor:preserve_layers}. 

But for any $x \in J$ (say $x = j_s$), by vertex-transitivity there exists an 
automorphism of $X$  that maps the vertex $(0,0)$ to the vertex $(s,0)$, 
and this must be one of the automorphisms $\sigma_a$, where $a$ is a unit in $\bZ_n$ 
and $a(J \cup -J) = J \cup -J$. 
In particular, since $\sigma_a$ takes $(0,v)$ to $(\alpha(0),a v)$ for all $v \in \bZ_n$, 
we have $\alpha(0) = s$ and therefore $x = j_s = j_{\alpha(0)} = \pm a j_0 = \pm a$, 
which gives $x(J \cup -J) = \pm a(J \cup -J) = \pm (J \cup -J) = J \cup -J$, for every $x \in J$.

Thus $J \cup -J$ is closed under multiplication, and by finiteness (and the fact that 
every element of $J$ is a unit mod $n$), it follows that $J \cup -J$ is a subgroup of 
$\bZ_n^{\,*}$, as required. 
\end{proof}

%This gives us a number of vertex-transitive $GI$-graphs:
% {Marston comment: The Corollary below follows from an earlier theorem 
% so we can move it earlier}

%%%%%%%%%
%\begin{Corollary}
%\label{corollary:VT}
%%%%%%%%%
%For every integer $n > 6$, there exists at least one vertex-transitive $GI$-graph 
%of the form $GI(n;J)$ for some $J$ with more than one layer, 
%namely a primitive $GI$-graph $GI(n;J)$ where $J \,\cup \,-J$ is a subgroup of 
%the multiplicative group $\bZ_n^{\,*}$ containing at least $4$ elements.
%\end{Corollary}
%
%Note that the above requires $\varphi(n) = |\bZ_n^{\,*}|$ to be at least $4$, 
%so that $n > 4$ and $n \ne 6$.

\smallskip
We will now find some other examples, and show that every vertex-transitive $GI$-graph has a special form.
To do that, we introduce some more notation: 
%for any multiset $J$, 
we denote by $[k]J$ the concatenation of $k$ copies of the multiset $J$.  
Note that this may involve a non-standard ordering of the elements of $[k]J$, 
but it makes the proofs of some things  in this and the next section easier to explain 
--- specifically, Theorem~\ref{thm:classvt} and Lemmas~\ref{lem:multipleCayley} 
and~\ref{lem:doublenonCayley}. 

%%%%%%%%%
\begin{Theorem}                   
\label{thm:classvt}
%%%%%%%%%
Let $X=GI(n;J)$ be any connected vertex-transitive $GI$-graph. 
Then 
\\[+2pt] 
{\em (a)} If $\Aut(X)$ has a vertex-transitive subgroup that preserves the 
fundamental edge-partition of $X$, 
then $GI(n;[k]J)$ is vertex-transitive for every positive integer $k$. 
\\[+2pt] 
{\em (b)} All elements in $J \cup -J$ have the same multiplicity, say $k_0$, 
and $($so conversely$)$ the graph $X = GI(n;J)$ is isomorphic to $GI(n;[k_0]J_0)$ 
for some primitive subset $J_0$ of $\bZ_n$, such that $GI(n;J_0)$ is vertex-transitive.
\end{Theorem}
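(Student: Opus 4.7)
I would index the layers of $X' := GI(n;[k]J)$ by pairs $(m,s) \in \bZ_k \times \bZ_t$, with layer $(m,s)$ carrying the parameter $j_s$, and write vertices of $X'$ as triples $(m,s,v)$. Given the hypothesised vertex-transitive subgroup $H \le \Aut(X)$, I construct two families of permutations of $V(X')$: ``diagonal'' lifts $\tilde h(m,s,v) := (m,\,h(s,v))$ for $h \in H$, and ``copy-shuffles'' $\tilde\pi(m,s,v) := (\pi(m),s,v)$ for $\pi \in \Sym(\bZ_k)$. Each $\tilde h$ preserves edges because the fact that $h$ respects spoke edges forces the second coordinate of $h(s,v)$ to depend only on $v$ (so spoke edges in $X'$ go to spoke edges), while $h$ sends layer edges to layer edges by hypothesis. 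Each $\tilde\pi$ is clearly an automorphism since it preserves all spokes and permutes layers carrying equal parameter. To map $(m,s,v) \mapsto (m',s',v')$, I pick $h \in H$ with $h(s,v) = (s',v')$ and $\pi$ with $\pi(m) = m'$, and apply $\tilde\pi \circ \tilde h$.

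\textbf{Plan for part (b), multiplicity claim.}
For the eight edge-transitive exceptions of Theorem~\ref{theorem:skewautom} the claim is trivial (each has $|J_0'| \le 2$ where $J_0'$ is the underlying set of $J$), so I may assume every automorphism of $X$ respects the fundamental edge-partition. The key device is the ``type partition'' $V(X) = \bigsqcup_{j \in J_0'} V_j$ with $V_j := \{(s,v) : j_s = j\}$ and $|V_j| = n m_j$. I would then check that each of the generators $\rho,\tau,\sigma_a,\lambda_{i,s_1,s_2}$ of Corollary~\ref{cor:preserve_layercycles} permutes these blocks: $\rho$ and $\tau$ fix each $V_j$ setwise; $\lambda_{i,s_1,s_2}$ moves only vertices inside the single block $V_{j_{s_1}} = V_{j_{s_2}}$; and $\sigma_a$ maps $V_j$ bijectively onto $V_{|aj|}$ (the cardinalities matching automatically, since $a \in B$ forces $m_j = m_{|aj|}$). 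Hence $\Aut(X)$ permutes $\{V_j\}$, and vertex-transitivity upgrades this to a transitive block-action; since any block-bijection matches sizes, all $m_j$ equal a common value $k_0$, yielding $J = [k_0]J_0'$.

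\textbf{Plan for the primitive-form part of (b).}
In a connected vertex-transitive $GI$-graph every layer must have the same cycle-structure, so $\gcd(n,j)$ is a constant $d$ on $J_0'$; combined with $\gcd(n,J) = 1$ from Proposition~\ref{thm:connectedGI}, this forces $d = 1$, so every $j \in J_0'$ is a unit in $\bZ_n$. Picking any $b \in J_0'$, setting $a := b^{-1}$, and applying Proposition~\ref{thm:multiply} gives $X \cong GI(n;[k_0]\,aJ_0')$. I set $J_0 := aJ_0'$: then $1 = ab \in J_0$, and since $J$ was in standard form and unit multiplication preserves $\pm$-inequivalence, $J_0$ is primitive. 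Finally, vertex-transitivity of $GI(n;J_0)$ follows from the already-deduced transitive action of $B$ on $J_0'/{\pm}$, which corresponds via the bijection $J_0' \leftrightarrow J_0$, $j' \mapsto aj'$, to a transitive action on $J_0/{\pm}$: the $\sigma_c$'s for $c \in B$ then act transitively on the layers of $GI(n;J_0)$, while $\langle \rho,\tau\rangle$ is transitive on each full $n$-cycle layer.

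\textbf{Main obstacle.}
The delicate step is verifying that every automorphism of $X$ really permutes the blocks $\{V_j\}$ --- in particular that each $\lambda_{i,s_1,s_2}$ preserves each block setwise, even though it does \emph{not} preserve individual layers. Once this is secured, both the multiplicity statement and the vertex-transitivity of $GI(n;J_0)$ reduce to reinterpreting the same transitive action in three equivalent forms: on vertices, on blocks $\{V_j\}$, and on $J_0/{\pm}$.
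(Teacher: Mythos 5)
Your proposal is correct and follows essentially the same route as the paper: part (a) via diagonal lifts of edge-partition-respecting automorphisms together with copy-shuffling permutations of the $k$ duplicates, and part (b) by disposing of the eight edge-transitive exceptions, using equal layer-cycle lengths plus connectedness to force every $j_s$ to be a unit, and then invoking the classification of automorphisms (Corollary~\ref{cor:preserve_layercycles}) to equalise multiplicities and transfer transitivity down to $GI(n;J_0)$. Your ``type partition'' $\{V_j\}$ is just a tidy repackaging of the paper's observation that any automorphism carrying $L_i$ to $L_s$ must involve a $\sigma_a$ with $a\in B$, which already preserves multiplicities.
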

\begin{proof}
Let $X = GI(n;J) = GI(n; j_0,j_1,...,j_{t-1})$, and let $Y = GI(n;[k]J)$.  

Note that the vertex-set of $Y$ is $\bZ_{kt} \times \bZ_n$, 
and we can write $[k]J = (j_0,j_1,...,j_{kt-1})$, where $j_c = j_d$
whenever $c \equiv d$ mod $t$, and accordingly, 
we can write each member $s$ of $Z_{kt}$ in the form $at+b$ 
where $a \in \bZ_k$ and $b \in \bZ_t$. 

Also note that any permutation $f$ of $\{1,2,...,k\}$ gives rise to a corresponding 
permutation $\widetilde{f}$ of $\{1,2,...,kt\}$, defined by setting 
$\widetilde{f}(at+b) = f(a)t+b$ for all $a \in \bZ_k$ and all $b \in \bZ_t$, 
and in fact gives rise to an automorphism $\theta = \theta_f$ of $Y = GI(n;[k]J)$, defined by 
\\[-14pt] 
\begin{center}
$ \theta_f(at+b,v) =  (\widetilde{f}(at+b),v) =  (f(a)t+b,v)
\quad \mbox{for all} \  a \in \bZ_k, \ b \in \bZ_t \, \mbox{ and }\, v \in \bZ_n.$
\\[+6pt] 
\end{center} 
It is easy to see that $\theta_f$ preserves the edges of each spoke $S_v$, 
and permutes the layers among themselves.  
In fact $\theta_f$ takes $L_{at+b}$ to $L_{f(a)t+b}$ for all $a \in \bZ_k$ and all $b \in \bZ_t$, 
and hence $\theta_f$ preserves each of the sets $\{L_s : s \in \bZ_{kt} \, | \, s \equiv b \ {\rm mod} \ t\,\}$ for $b \in \bZ_t$. 

It follows that given any two layers $L_c = \{(c,v) : v \in \bZ_n\}$ and
$L_d = \{(d,v) : v \in \bZ_n\}$ with $c \equiv d$ mod $t$, there exists an
automorphism $\theta$ of $Y$ taking $L_c$ to $L_d$.
In particular, since  $\Aut(Y)$ is transitive on vertices of each layer 
(as is the automorphism group of every $GI$-graph), we find that $\Aut(Y)$ has 
at most $t$ orbits on vertices of $Y$.

%
%Automorphisms of Y determined by (some) automorphisms of X
%

We can now prove (a), by extending certain automorphisms of $X$ to automorphisms 
of $Y$ that make it vertex-transitive. 

Let $\xi$ be any automorphism of $X$ that respects the fundamental edge-partition. 
Define a permutation 
$\pi = \pi_{\xi}$ of the vertex set of $Y$ by letting
$$
\pi(at+b,v) = (at+c,w) \quad \mbox{ whenever } \, \xi(b,v) = (c,w), 
$$
for all $a \in \bZ_k$,  all $b \in \bZ_t$, and all $v \in \bZ_n$.
%Now consider what $\pi$ does to edges of $Y$ incident to a given
%vertex $(at+b,v)$.  Suppose $\xi$ takes $(b,v)$ to $(c,w)$. Then
%$\pi$ takes $(at+b,v)$ to $(at+c,w)$.

If $e$ is a spoke edge of $Y$, say from $(at+b,v)$ to $(a't+b',v)$, 
and $(c,w) = \xi(b,v)$, then since $\xi$ takes spoke edges to spoke edges in $X$, 
we see that $\xi(b',v) = (c',w)$ for some $c' \in \bZ_t$, and so by definition  
$\pi(a't+b',v) = (a't+c',w)$, which is a neighbour of $(at+c,w)$. 
Thus $\pi$ takes the edge $e$ to the spoke edge in $Y$ from $(at+c,w)$ to $(a't+c',w)$.
%
%In fact this shows that $\pi$ takes the spoke $\{(s,v) : s \in \bZ_{tk}\}$ of $Y$
%to the spoke $\{(s,w) : s \in \bZ_{tk}\}$ of $Y$, and in particular, we find that
%$\pi$ permutes the spokes of $Y$.

Similarly, if $e$ is a layer edge of $Y$, say from $(at+b,v)$ to $(at+b,z)$,
with $z = v+j_b$ (since $j_d = j_{d'}$ whenever $d \equiv d'$ mod $t$), 
and $(c,w) = \xi(b,v)$, then since $\xi$ permutes the layers of $X$, 
we know that $\xi$ takes the neighbour $(b,z) = (b,v+j_b)$
of $(b,v)$ on the same layer of $X$ as $(b,v)$ to a neighbour of $(c,w)$ on the
the same layer of $X$ as $(c,w)$, namely $(c,w \pm j_c)$.
Hence by definition, $\pi(at+b,z) = (at+c,w \pm j_c)$,
which is a neighbour of $(at+c,w)$ in $Y$ because $j_{at+c} = j_c$.
Thus $\pi$ takes $e$ to a layer edge from $(at+c,w)$ 
to $(at+c,w \pm j_c)$ in $Y$.
%
%In fact this shows that $\pi$ takes the layer $L_{at+b} = \{(at+b,u) : u \in \bZ_n\}$
%to the layer $L_{at+c} = \{(at+c,u) : u \in \bZ_n\}$, and in particular, 
%we find that $\pi$ permutes the layers of $Y$.

In particular, since $\pi$ preserves both the set of all spoke edges of $Y$ 
and the set of all layer edges of $Y$, we find that $\pi = \pi_{\xi}$ is an automorphism of $Y$. 

Moreover, since $\xi$ can be chosen to take any layer of $X$ to any other layer of $X$, 
it follows that the subgroup of $\Aut(Y)$ generated by the automorphisms $\theta_f$ 
and $\pi_{\xi}$ found above is transitive on layers of $Y$, and hence $Y$ is vertex-transitive.

%
%Expressing X as a multiple of a primitive GI-graph 
%

\medskip
Next we prove (b), namely that all elements in $J \cup -J$ have the same multiplicity, 
say $k_0$, and $X$ is isomorphic to $GI(n;[k_0]J_0)$ for some 
primitive subset $J_0$ of $\bZ_n$ such that $GI(n;J_0)$ is vertex-transitive.

If $X$ is edge-transitive, then by Theorem~\ref{theorem:skewautom} we have 
$(n;J) = (4;1,1)$, $(5;1,2)$, $(8;1,3)$, $(10;1,2)$, $(10;1,3)$, $(12;1,5)$, $(24;1,5)$  
or $(3;1,1,1)$.  In the first case, we can take $k_0 = 2$ and $J_0 = \{1\}$, and  
observe that $GI(n;J_0) = GI(4,1)$ which is simply a $4$-cycle, and vertex-transitive.  
Similarly, in the last case, we can take $k_0 = 3$ and $J_0 = \{1\}$, and  
observe that $GI(n;J_0) = GI(3,1)$ which is a $3$-cycle, and vertex-transitive.  
In all the other six cases, we can take $k_0 = 1$ and $J_0 = J$, and note that 
$X = GI(n;J)$ itself is vertex-transitive.  Thus (b) holds in all eight cases, and so from 
now on, we may assume that $X$ is not edge-transitive, and hence that 
every automorphism of $X$ respects the fundamental edge-partition. 

This implies that $\Aut(X)$ is transitive on the layers of $X$, and it follows that 
all the layer cycles have the same length, so $\gcd(n,j_s)=\gcd(n,j_0)$ for all $s \in \bZ_t$.
But on the other hand, $X = GI(n;J)$ is connected, so $\gcd(n,j_0,j_1\dots,j_{t-1})=1$. 
Thus $\gcd(n,j_s) = 1$ for all $s \in \bZ_t$.

In particular, there exists $a \in \bZ_n^{\,*}$ such that $1 = aj_s \in aJ$. 
Now by Theorem~\ref{thm:multiply}, the graph $X = GI(n;J)$ is isomorphic to $GI(n;aJ)$, 
and therefore we can replace $J$ by $aJ$, or more simply, suppose that $1 \in J$. 
% Denote $J^\prime=aJ$. Replace every $j \in J^\prime$ such that $j > n/2$ with 
%$n-j$ and denote the so-obtained set by $J^{\prime \prime}$.
%Obviously, $GI(n;J)$ is isomorphic to $GI(n;J^{\prime \prime})$.

If all the elements of $J$ are the same, then $X = GI(n;J)$
is isomorphic to $GI(n; [t]\{1\})$, and then since the set $\{1\}$ is primitive
and $GI(n; 1)$ is simply an $n$-cycle, again (b) holds. 

So now suppose that not all elements of $J$ are the same. 
For any two distinct $j_i, j_s \in J$, there must be an automorphism $\sigma_a$ 
that takes layer $L_i$ to layer $L_s$, by Corollary  \ref{cor:preserve_layers}.
In this case  $a(J \cup -J) = J \cup -J$, by definition of $\sigma_a$, 
and therefore the multiplicities of $j_i$ and $j_s$ are the same.
Hence all elements of $J \cup -J$ have the same multiplicity, say $k_0$.          

In particular, $J=[k_0]J_0$ where $J_0$ is the underlying set of $J$, 
and $X$ is isomorphic to $GI(n;[k_0]J_0)$.
The set $J_0$ is primitive since it contains 1 and all of its elements are distinct. 
To finish the proof, all we have to do is show that $GI(n;J_0)$ is vertex-transitive.
But that is easy: for any two distinct $j_i,j_s \in J_0$, we know that 
there exists an automorphism $\sigma_a$ of $X$ taking layer $L_i$ of $X$ to layer $L_s$
of $X$, and $a(J \cup -J) = J \cup -J$; it then follows that $a(J_0 \cup -J_0)=J_0 \cup -J_0$, 
and therefore $\sigma_a$ induces an automorphism of $GI(n;J_0)$ that takes 
layer $L_i$ of $GI(n;J_0)$ to layer $L_s$ of $GI(n;J_0)$, as required.  
\end{proof}

Note that the above theorem above applies only to connected $GI$-graphs.
Disconnected vertex-transitive $GI$-graphs are just disjoint unions of connected 
vertex-transitive $GI$-graphs, and can be dealt with accordingly.

\smallskip
We finish this section with observations about the graphs $GI(5;1,2)$ 
and $GI(10;1,2)$. 

The Petersen graph $GI(5;1,2)$ is vertex-transitive, and its automorphism group 
acts transitively on the two layers;  in fact so does a subgroup of order 20 which  
preserves the set of its ten layer edges. 
By Theorem \ref{thm:classvt}, it follows that every $GI$-graph of the 
form $GI(5;1,2,1,2,\ldots,1,2)$ is vertex-transitive.

On the other hand, the automorphism group of the dodecahedral 
graph $GI(10;1,2)$ has no layer-transitive subgroup preserving the 
set of layer edges (and the set of spoke edges), and so the above theorem 
does not apply to it.  
In fact $GI(10;[k]\{1,2\})$ is not vertex-transitive for any $k > 1$, because 
the fact that $2$ is not a unit mod $10$ implies that the automorphism group 
has two orbits on layers.

The graph $GI(10;1,2)$ is the only such exception, since for every other 
vertex-transitive $GI$-graph $X$, either $\Aut(X)$ itself preserves the 
fundamental edge-partition, or $X$ is edge-transitive and is then 
one of the other seven graphs in Theorem~\ref{theorem:skewautom}, 
and for each of those, the subgroup of $\Aut(X)$ preserving the fundamental 
edge-partition is layer-transitive.

%%%%%%%%%%%%%%%
\section{Cayley $GI$-graphs}
\label{cayley} 
%%%%%%%%%%%%%%%

In this section we characterise the $GI$-graphs that are Cayley graphs.

First, a Cayley graph ${\rm Cay}(G,S)$ is a graph whose vertices can be labelled 
with the elements of some group $G$, and whose edges correspond to multiplication 
by the elements of some subset $S$ or their inverses.  In particular, the edges of 
${\rm Cay}(G,S)$ may be taken as the pairs $\{g,sg\}$ for all $g \in G$ and all $s \in S$, 
and then the group $G$ acts naturally as a group of automorphisms of ${\rm Cay}(G,S)$ 
by right multiplication.  This action is transitive on vertices, indeed regular on vertices: 
for any ordered pair $(u,v)$ of vertices, there is a unique element of $G$ taking $u$ 
to $v$ (namely $g = u^{-1}v$). 

Alternatively, a Cayley graph is any (regular) graph $X$ whose automorphism group 
has a subgroup $G$ that acts regularly on vertices.  In that case, any particular 
vertex can be labelled with the identity element of $G$, and the subset $S$ can be taken 
as the set of all $s \in G$ taking that vertex to one of its neighbours.  

Note that under both definitions, the Cayley graph is connected if and only if the 
set $S$ generates the group $G$.  Also note that every Cayley graph is vertex-transitive 
(by definition), and that every non-trivial element of the subgroup $G$ fixes no vertices 
of the graph. 

\smallskip
Now suppose $X= GI(n;J)$ is a vertex-transitive $GI$-graph.  

We will assume that $X$ is connected, because if it is not, then it is simply a disjoint 
union of isomorphic copies of a connected smaller example. 
In particular, by Theorem~\ref{thm:classvt}, we know that either $J$ is primitive 
(and $X$ is one of the graphs given by Theorem~\ref{thm:primVT}), 
or all elements in $J \cup -J$ have the same multiplicity $k_0 > 1$ and then $X$ is isomorphic 
to $GI(n;[k_0]J_0)$ for some primitive subset $J_0$ of $\bZ_n$ such that $GI(n;J_0)$ is 
vertex-transitive.

%Also for the time being, let us suppose that $X$ is not edge-transitive.  (We will 
%consider the edge-transitive cases shortly.)  This puts us in the context of 
Also we will suppose that $X$ is not $GI(10;1,2)$, for reasons related 
to Theorem~\ref{thm:primVT}. 
In fact, of the seven generalized Petersen graphs among the eight edge-transitive 
$GI$-graphs listed in Theorem~\ref{theorem:skewautom}, it is known by the main 
result of \cite{NedelaSkoviera} or \cite{Lovrecic1} that $G(4,1)$, $G(8,3)$, $G(12, 5)$ 
and $G(24, 5)$ are Cayley graphs, while $G(5,2)$, $G(10, 2)$ and $G(10, 3)$ are not.   
Most of this (and the fact that the eighth edge-transitive graph $GI(3;1,1,1)$ is a Cayley graph) 
will actually follow from what we prove below. 

\smallskip
Consider the case where $J$ is primitive (as we defined in Section~\ref{vertextrans}).
In this case, $J \,\cup \,-J$ is a subgroup of $\bZ_n^{\,*}$ 
under multiplication, and also $|\!\Aut(X)| = n|J \cup -J| = 2n|J| = 2|V(X)|$. 

Hence if $G$ is a subgroup of $\Aut(X)$ that acts regularly on vertices of $X$, 
then $G$ is a subgroup of index $2$ in $\Aut(X)$. 
On the other hand, $G$ cannot contain the element $\tau$, since $\tau$ is 
a non-trivial automorphism with fixed points (namely the vertices $(s,0)$ for all $s$), 
and it follows that $G$ must be generated by the rotation $\rho$ and some 
subgroup of index $2$ in $\{\sigma_a : \, a \in J \cup -J\}$ not containing $\sigma_{-1} = \tau$. 
The latter has to be of the form $\{\sigma_a : \, a \in K\}$ for some subgroup 
$K$ of $J \cup -J$, such that $-1 \notin K$. 

Conversely, if $J$ is a set, and $K$ is a subgroup of index $2$ in $J \cup -J$ 
not containing $-1$, then the group generated by $\{\sigma_a : \, a \in K\}$ permutes 
the layers of $X$ transitively, and so the subgroup generated 
by $\{\rho\} \cup \{\sigma_a : \, a \in K\}$ acts regularly on $V(X)$. 

\smallskip
Thus we have the following:

%%%%%%%%%
\begin{Proposition} 
\label{thm:primCayley} 
%%%%%%%%%
If $GI(n;J)$ is primitive and $J \cup -J$ is a multiplicative subgroup of $\bZ_n^{\,*},$ 
%but not edge-transitive, 
then $GI(n;J)$ is a Cayley graph if and only if $J \cup -J$ 
has a subgroup of index $2$ that does not contain $-1$. 
\end{Proposition}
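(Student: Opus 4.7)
The plan is to translate the Cayley property into a group-theoretic condition on $\Aut(X)$ and then apply the structural description from Subsection~\ref{subs:set}. Under the hypotheses of the proposition, the subgroup $A = \{a \in \bZ_n^{\,*} : a(J \cup -J) = J \cup -J\}$ used there equals $J \cup -J$ itself: since $J \cup -J$ is closed under multiplication it is contained in $A$, while any $a \in A$ sends $1 \in J \cup -J$ to $a \in J \cup -J$. Hence $\Aut(X) \cong \langle \rho \rangle \rtimes S$ with $S = \{\sigma_a : a \in J \cup -J\}$, and $|\Aut(X)| = n|J \cup -J| = 2|V(X)|$. The seven edge-transitive primitive cases, for which the automorphism group is strictly larger, agree with the proposition on direct inspection and are also settled by the results of \cite{Frucht, Lovrecic1, NedelaSkoviera}.

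Since $\tau$ fixes $(0,0)$ and $|\mathrm{Stab}_{(0,0)}(\Aut(X))| = |\Aut(X)|/|V(X)| = 2$ by orbit-stabilizer, the stabilizer of $(0,0)$ is exactly $\{1,\tau\}$. A subgroup $G \leq \Aut(X)$ therefore acts regularly on $V(X)$ if and only if it has index $2$ and avoids $\tau$, so $X$ is a Cayley graph if and only if such a $G$ exists. For the backward direction, given an index-$2$ subgroup $K$ of $J \cup -J$ with $-1 \notin K$, the subgroup $\langle \rho \rangle \rtimes \{\sigma_a : a \in K\}$ of $\Aut(X)$ has index $2$, visibly misses $\tau = \sigma_{-1}$, and is therefore regular on vertices.

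For the forward direction, given $G$ of index $2$ with $\tau \notin G$, I would extract the required $K$ as follows. If $\langle \rho \rangle \leq G$, then $G/\langle\rho\rangle$ is an index-$2$ subgroup of $S \cong J \cup -J$ that does not contain $\tau$, and its preimage in $J \cup -J$ is the desired $K$. Otherwise $G \cap \langle \rho \rangle$ has index $2$ in $\langle \rho \rangle$ (which forces $n$ even), and $G$ surjects onto $S$, so for each $a \in J \cup -J$ some representative $\rho^{k(a)} \sigma_a$ lies in $G$ with $k(a)$ well-defined modulo $2$. Using the relation $\rho^k \sigma_a = \sigma_a \rho^{ak}$ inside the semi-direct product, together with the observation that every unit of $\bZ_n^{\,*}$ is odd when $n$ is even, a direct calculation shows that $a \mapsto k(a) \bmod 2$ is a homomorphism $J \cup -J \to \bZ/2$. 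The assumption $\tau \notin G$ forces $k(-1)$ odd (otherwise $\rho^{-k(-1)}(\rho^{k(-1)}\tau) = \tau$ would lie in $G$), so its kernel is an index-$2$ subgroup of $J \cup -J$ not containing $-1$.

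The main obstacle is this ``twisted'' sub-case of the forward direction, where $\rho \notin G$: one must verify coherently that $a \mapsto k(a) \bmod 2$ is a well-defined homomorphism and that the hypothesis $\tau \notin G$ yields non-containment of $-1$ in its kernel. Both reduce to short computations with the semi-direct product relations, simplified by the parity-preservation of multiplication and inversion in $\bZ_n^{\,*}$ when $n$ is even. Everything else is a direct application of the structure theorem from Subsection~\ref{subs:set} and the orbit-stabilizer theorem.
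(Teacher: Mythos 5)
Your proposal is correct and follows essentially the same route as the paper: compute $|\Aut(X)| = n|J\cup -J| = 2|V(X)|$ from the structure theorem of Subsection~\ref{subs:set}, observe that a vertex-regular subgroup is exactly an index-$2$ subgroup avoiding $\tau = \sigma_{-1}$, and translate this into an index-$2$ subgroup of $J\cup -J$ not containing $-1$. The one place you go beyond the paper is the forward direction: the paper simply asserts that a regular subgroup $G$ must be $\langle\rho\rangle\rtimes\{\sigma_a : a\in K\}$, whereas you correctly note that when $n$ is even $G$ need not contain $\rho$, and your parity-homomorphism argument ($a\mapsto k(a)\bmod 2$, using that units mod an even $n$ are odd) cleanly shows the required $K$ still exists in that twisted sub-case, which closes a small gap the paper leaves implicit.
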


Note that this gives infinitely many examples of $GI$-graphs that are Cayley graphs,  
including those where $n$ is a prime congruent to $3$ mod $4$ and $J$ is the 
subgroup of all squares in $\Z_n^{\,*}$. 
On the other hand, it also gives infinitely many vertex-transitive $GI$-graphs that are not Cayley 
graphs, including those where $n$ is a prime congruent to $1$ mod $4$ 
and $J \, \cup\, -J = \Z_n^{\,*} = \Z_n\!\setminus\!\{0\}$.

This theorem also shows that among the six primitive $GI$-graphs that 
are edge-transitive, $G(8;1,3)$, $G(12;1,5)$ and $G(24;1,5)$ are Cayley graphs, 
while $G(5;1,2)$ and $G(10;1,3)$ are not.  
(The graph $GI(10;1,2)$ is not a Cayley graph, for other reasons.)

\smallskip
Next, consider the more general case, where $X=GI(n;J)$ is connected and 
vertex-transitive. %but not edge-transitive.  
In this case, by Theorem~\ref{thm:classvt}, 
we know that all elements in $J \cup -J$ have the same multiplicity $k_0$, 
and $X$ is isomorphic to $GI(n;[k_0]J_0)$ for some primitive subset $J_0$ 
of $\bZ_n$, such that $GI(n;J_0)$ is vertex-transitive.
Also by what we found in Section~\ref{vertextrans} and 
sub-section~\ref{subs:general} of Section~\ref{automgps},  
we have $d_j := \gcd(n,j) = 1$ for all $j \in J_0$, and therefore 
$$
|\!\Aut(X)| = n|J_0 \cup -J_0| \prod_{j \in J_0} (k_0!)^{d_j}  
= 2n|J_0| (k_0!)^{|J_0|}. 
$$ 

We will find the following helpful, and to state it, we will refer to the 
automorphism $\rho$ of each $GI$-graph $GI(n;J)$ as its {\em standard rotation}, 
and sometimes denote it by $\rho_J$. 

%%%%%%%%%%%%%
\begin{Lemma} 
\label{lem:multipleCayley}
%%%%%%%%%%%%%
If $GI(n;J)$ has a vertex-regular subgroup containing the standard rotation, 
then so does $GI(n;[k]J)$ for every integer $k > 1$.
\end{Lemma}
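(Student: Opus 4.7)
The plan is to lift the given vertex-regular subgroup of $\Aut(X)$ to a vertex-regular subgroup of $\Aut(Y)$, where $X = GI(n;J)$ with $J = (j_0,\ldots,j_{t-1})$ and $Y = GI(n;[k]J)$. As in the proof of Theorem~\ref{thm:classvt}(a), parametrise $V(Y) = \bZ_{kt} \times \bZ_n$ as $(at+b,v)$ with $a \in \bZ_k$, $b \in \bZ_t$, $v \in \bZ_n$, and recall that every automorphism $\xi$ of $X$ that respects the fundamental edge-partition lifts to an automorphism $\pi_\xi$ of $Y$ defined by $\pi_\xi(at+b,v) = (at+c,w)$ whenever $\xi(b,v) = (c,w)$. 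Also let $\theta$ be the order-$k$ block-shift $\theta(at+b,v) = ((a+1)t+b,v)$, which is obviously an automorphism of $Y$.

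Suppose first that the given subgroup $G \le \Aut(X)$ consists entirely of automorphisms respecting the fundamental edge-partition. A short direct verification gives that $\xi \mapsto \pi_\xi$ is an injective homomorphism, that $\theta$ commutes with each $\pi_\xi$ (since they act on disjoint coordinates), and that $\pi_{\rho_X} = \rho_Y$ because $\rho_X(b,v) = (b,v+1)$ makes $\pi_{\rho_X}(at+b,v) = (at+b,v+1)$. I would then set $G' = \pi(G) \times \langle\theta\rangle$, which has order $|G|\cdot k = nt\cdot k = |V(Y)|$ and contains $\rho_Y = \pi_{\rho_X}$. It remains only to check that $G'$ acts regularly on $V(Y)$, which follows from a straightforward stabiliser calculation at $(0,0)$: if $\pi_\xi\theta^m(0,0) = (0,0)$, then writing $\xi(0,0)=(c,w)$ we need $mt+c \equiv 0 \pmod{kt}$ and $w = 0$; since $c \in \bZ_t$ this forces $c = 0$ and $m \equiv 0 \pmod k$, so $\xi$ fixes $(0,0) \in V(X)$, and vertex-regularity of $G$ on $V(X)$ then gives $\xi = 1$.

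The only real obstacle is the hypothesis that $G$ respects the fundamental edge-partition. By Theorem~\ref{theorem:skewautom}, skew automorphisms of $X$ exist only when $X$ is one of the eight exceptional edge-transitive $GI$-graphs, so outside those sporadic cases the assumption is automatic and the lifting construction applies unchanged. For each of the exceptional graphs one can simply inspect: in those with $t=2$, the dihedral group $\langle\rho,\tau\rangle$ of order $2n$ is already a vertex-regular subgroup of $\Aut(X)$ containing $\rho_X$ and respecting the edge-partition (so we may apply the construction to it), and the single remaining case $GI(3;1,1,1)$ can be checked by hand. Thus the heart of the proof is the lifting construction of the first two paragraphs, with only a small case-by-case verification needed to dispose of the skew-automorphism exceptions.
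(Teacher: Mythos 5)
Your core construction --- lifting each edge-partition-respecting automorphism $\xi$ of $X$ to $\pi_\xi$ on $Y$, adjoining the block-shift $\theta_f$ for a $k$-cycle $f$, and checking regularity of the resulting group of order $ntk$ --- is essentially the paper's own proof of Lemma~\ref{lem:multipleCayley}; the paper merely phrases the regularity check as an order count after first adjusting a generating set $S$ so that $\langle S\rangle$ acts regularly on the layers, whereas you lift all of $G$ and compute the stabiliser of $(0,0)$ directly. That part of your argument is sound.

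The gap is in your disposal of the exceptional (skew-automorphism) cases. The subgroup $\langle\rho,\tau\rangle$ is \emph{not} vertex-regular on a $GI$-graph with $t=2$: both $\rho$ and $\tau$ are automorphisms of type (1), preserving every layer, so $\langle\rho,\tau\rangle$ has the two layers $L_0$ and $L_1$ as orbits and is not even vertex-transitive; moreover $\tau$ fixes every vertex $(s,0)$, so its action is not semi-regular either. Hence that sentence establishes nothing. What is actually needed is a vertex-regular subgroup containing $\rho$ and respecting the fundamental edge-partition, and only for those exceptional graphs that satisfy the hypothesis at all --- namely $G(4,1)$, $G(8,3)$, $G(12,5)$, $G(24,5)$ and $GI(3;1,1,1)$, since $G(5,2)$, $G(10,2)$ and $G(10,3)$ are not Cayley graphs and so the hypothesis is vacuous for them. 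For $G(n,k)$ with $k^2\equiv 1$ mod $n$ and $k\neq\pm 1$ one may take $\langle\rho,\sigma_k\rangle$, where $\sigma_k$ is the fixed-point-free involution of Proposition~\ref{propn:change_layers} interchanging the two layers; for $G(4,1)=GI(4;1,1)$ take $\langle\rho,\lambda_{0,1}\rangle$ with $\lambda_{0,1}$ as in Corollary~\ref{cor:exchange_layers}, and $GI(3;1,1,1)$ is handled similarly using the $\lambda$'s. With that correction your argument goes through and coincides with the paper's.
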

\begin{proof}
Let $X = GI(n;J)$ and $Y = GI(n;[k]J)$, and let $\rho$ ($= \rho_J$) be the 
standard rotation for $X$.  Also let $\{\rho\} \cup S$ be a generating set  
for a vertex-regular subgroup of $\Aut(X)$. 
Note that $\Aut(X)$ is layer-transitive on $X$, since $X$ is not $GI(10;1,2)$. 
Now by multiplying elements of $S$ 
by powers of $\rho$ if necessary, we may assume that $\langle S \rangle$ 
induces a regular permutation group on the set of layers of $X$.  
In particular, $\langle S\rangle$ has order $J$.
Next, for each $\xi \in S$, the automorphism $\pi_\xi$ defined in the proof 
of Theorem~\ref{thm:classvt} acts fixed-point-freely on $Y = GI(n;[k]J)$, 
and it follows that the set $\{\pi_\xi : \xi \in S\}$ generates a subgroup of order $|J|$ 
that permutes the layers of $Y = GI(n;[k]J)$ in $|J|$ blocks of size $k$. 
Also if $f$ is the $k$-cycle $f = (1,2,\dots,k)$ in $\Sym(k)$, then the 
automorphism $\theta_f$ defined in the proof of Theorem~\ref{thm:classvt} 
induces a $k$-cycle on each of those $|J|$ layer-blocks.  
Finally, $\theta_f$ commutes with $\rho_{[k]J}$ and all the $\pi_\xi$ 
(for $\xi \in S$), so the subgroup generated by $\rho_{[k]J}$, $\theta_f$ 
and all the  $\pi_\xi$ has order $nk|J|$, and acts regularly on the vertices of $Y$, 
as required.  
\end{proof} 

Note that this shows, for example, that both of the remaining  two edge-transitive 
$GI$-graphs $GI(4;1,1)$ and $GI(3;1,1,1)$ are Cayley graphs. 

\smallskip
Somewhat surprisingly, we also have the following:

%%%%%%%%%%%%%
\begin{Lemma} 
\label{lem:doublenonCayley}
%%%%%%%%%%%%%
If $J$ is primitive and both $GI(n;J)$ and $GI(n;[2]J)$ are vertex-transitive, then $GI(n;[2]J)$ 
is always a Cayley graph, and so is $GI(n;[k]J)$ for every even integer $k > 1$.
\end{Lemma}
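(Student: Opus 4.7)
The plan is to produce, for $Y := GI(n;[2]J)$, a vertex-regular subgroup of $\Aut(Y)$ that contains the standard rotation $\rho$; then for every even integer $k > 2$, writing $k = 2k'$ with $k' > 1$ gives $GI(n;[k]J) = GI(n;[k'][2]J)$, and Lemma~\ref{lem:multipleCayley} applied to the multiset $[2]J$ will hand us a vertex-regular subgroup of $GI(n;[k]J)$.

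First I would dispose of the exceptional case. Under the hypothesis, $GI(n;[2]J)$ is vertex-transitive; but, as noted in the final paragraph of Section~\ref{vertextrans}, $GI(10;[k]\{1,2\})$ fails to be vertex-transitive for any $k > 1$, so the exception $(n,J) = (10,\{1,2\})$ of Theorem~\ref{thm:primVT} cannot occur here. Hence $H := J \cup -J$ is a multiplicative subgroup of $\bZ_n^{\,*}$, of order $2t$ where $t = |J|$; and primitivity of $J$ makes this a disjoint union $H = J \sqcup -J$.

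Next I would label the vertex set of $Y$ as $J \times \bZ_2 \times \bZ_n$, where $(j,\epsilon,v)$ parametrises layer $\epsilon \in \bZ_2$ of the $j$-pair. By sub-section~\ref{subs:general}, $\Aut(Y) = N \rtimes \langle \rho\rangle \rtimes H$, where $N = (\bZ_2)^J$ is generated by the layer-pair swaps $\nu_j$, and $\{\sigma_a : a \in H\}$ is the $H$-copy from Proposition~\ref{propn:change_layers}, chosen so that $\sigma_a(j,\epsilon,v) = (a\!\cdot\! j,\epsilon,av)$ where $a \cdot j$ is the unique element of $J$ with $a j = \pm (a \cdot j)$. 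The key move is to twist these by a cocycle that depends on the layer index $j$: for $a \in H$ and $j \in J$, set
\[
\chi(a,j) \;=\; \begin{cases} 0 & \text{if } aj \in J, \\ 1 & \text{if } aj \in -J, \end{cases}
\]
and define $\tilde\sigma_a \in \Aut(Y)$ by $\tilde\sigma_a(j,\epsilon,v) = (a\!\cdot\! j, \, \epsilon + \chi(a,j), \, av)$. A short verification using $(ab)j = a(bj)$ and $J \cap -J = \emptyset$ gives the cocycle identity $\chi(ab,j) = \chi(b,j) + \chi(a, b\!\cdot\! j)$, which is exactly what is needed for $\tilde\sigma_a \tilde\sigma_b = \tilde\sigma_{ab}$; so $C := \{\tilde\sigma_a : a \in H\}$ is a subgroup of $\Aut(Y)$ isomorphic to $H$.

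Finally I would form $G := \langle \rho\rangle C$, which is a semidirect product $\langle\rho\rangle \rtimes C$ since each $\tilde\sigma_a$ normalises $\langle\rho\rangle$ via $\tilde\sigma_a \rho \tilde\sigma_a^{-1} = \rho^a$, and $\tilde\sigma_a \in \langle\rho\rangle$ forces $a = 1$ (because $\chi(-1,j) = 1$ for every $j$). Then $|G| = n \cdot 2t = |V(Y)|$, so regularity is equivalent to triviality of the stabiliser of $(1,0,0)$. A direct check gives $\rho^i \tilde\sigma_a(1,0,0) = (a\!\cdot\! 1, \, \chi(a,1), \, i)$, which equals $(1,0,0)$ only when $a \in \{\pm 1\}$, $\chi(a,1) = 0$, and $i = 0$; since $\chi(1,1) = 0$ but $\chi(-1,1) = 1$, only the identity qualifies, so $G$ is vertex-regular and contains $\rho$, and Lemma~\ref{lem:multipleCayley} completes the argument. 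The heart of the matter is exhibiting the cocycle: if $\chi(\cdot,j)$ were forced to be independent of $j$ it would have to be a homomorphism $H \to \bZ_2$ with $\chi(-1) = 1$, and such a homomorphism need not exist (this is precisely the obstruction to $GI(n;J)$ itself being a Cayley graph, by Proposition~\ref{thm:primCayley}); allowing $\chi$ to depend genuinely on $j$, together with the disjointness $J \cap -J = \emptyset$, is what furnishes one for free.
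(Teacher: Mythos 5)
Your proof is correct, but it takes a genuinely different route from the paper's. The paper first reduces (via Lemma~\ref{lem:multipleCayley}) to the case where $GI(n;J)$ is itself not a Cayley graph, then invokes Proposition~\ref{thm:primCayley} to decompose $J\cup -J$ as $U\times W$ with $U$ a cyclic $2$-group of order $q>2$ containing $-1$, and exhibits a single semi-regular element $\sigma_u\lambda_{0,t}$ of order $q$ by an explicit computation of its cycle structure on the doubled layers; the regular subgroup is then generated by $\rho$, $\sigma_u\lambda_{0,t}$ and the $\sigma_w$ for $w\in W$. You instead twist the \emph{entire} group $H=J\cup -J$ at once by the sign cocycle $\chi(a,j)$, obtaining a complement $C\cong H$ acting freely on the $2t$ layers of $Y$, so that $\langle\rho\rangle\rtimes C$ is regular; each $\tilde\sigma_a$ is indeed an automorphism, being $\sigma_a$ (for the choice of $\alpha$ fixing the $\bZ_2$-coordinate) composed with the disjoint layer-swaps $\lambda_{s_1,s_2}$ of Corollary~\ref{cor:exchange_layers} over those $j$ with $\chi(a,j)=1$, and the cocycle identity $\chi(ab,j)=\chi(b,j)+\chi(a,b\cdot j)$ does make $a\mapsto\tilde\sigma_a$ a homomorphism. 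Your approach buys uniformity and brevity: it needs no case split on whether $GI(n;J)$ is Cayley, no structure theory of the abelian group $H$, and it isolates the conceptual point that the obstruction to $GI(n;J)$ being Cayley is the failure of the sign map $a \mapsto \chi(a,\cdot)$ to be realizable independently of $j$, which the extra $\bZ_2$ of duplicated layers absorbs. The paper's argument, by contrast, produces very explicit cycle decompositions of the relevant automorphisms, which is more hands-on but requires the auxiliary decomposition $U\times W$ and the preliminary reduction. One small presentational nit: your justification that $\tilde\sigma_a\in\langle\rho\rangle$ forces $a=1$ is cleaner by comparing the actions on the $\bZ_n$-coordinate (multiplication by $a$ versus translation by $i$) than by appealing to $\chi(-1,j)=1$, but this does not affect correctness.
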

\begin{proof}
First, if $GI(n;J)$ is a Cayley graph, then this follows from Lemma~\ref{lem:multipleCayley}, 
so we will assume that $GI(n;J)$ is not a Cayley graph. 
Also because $GI(n;[2]J)$ is vertex-transitive, we know that $X \ne GI(10;1,2)$, 
and so $J \cup -J$ is a subgroup of $\bZ_n^{\,*}$, by Theorem~\ref{thm:primVT}. 
On the other hand, by Proposition~\ref{thm:primCayley}, we know that $J \cup -J$ has no subgroup 
of index $2$ that excludes $-1$. 
Hence we can write $J \cup -J$ as $U \times W$, where $U$ is a cyclic $2$-subgroup 
containing $-1$ and of order $q = 2^e$ for some $e > 1$, and $W$ is complementary 
to $U$, and of order $2t/q$.  Also let $u$ be a generator of $U$, so that $u^{q/2} = -1$. 

Now consider the automorphisms of $Y = GI(n;[2]J)$.  
For each $a \in J \cup -J = U \times W$, without loss of generality we will choose the 
associated bijection $\alpha: \bZ_{2t} \to \bZ_{2t}$ to be the `duplicate' of the 
corresponding natural bijection from $\bZ_{t}$ to $\bZ_{t}$, namely so that 
$\alpha$ takes $s$ to $s'$, and $s+t$ to $s'+t$, whenever 
$j_{s'} = j_{s'+t} = \pm a j_{s} = \pm a j_{s+t}$ (for $0 \le s < t$). 

For the moment, suppose that $W$ is trivial, so that $U = J \cup -J$. 
Then the automorphism $\sigma_u$ is not semi-regular, 
because the vertex $(0,0)$ lies in a cycle of length $q/2$ consisting of 
all $(s,0)$ with $0 \le s < t$ and $\pm j_{s} = u^i$ for some $i$, while the vertex $(0,1)$ 
lies in a cycle of length $q$ consisting of all $(s,u^i)$ such that $0 \le s < t$ and $\pm j_{s} = u^i$, 
for $0 \le i < q$.
Hence in particular, the subgroup generated by $\rho$ and $\sigma_u$ has order 
$nq = 2nt$, but cannot be vertex-regular (since the $(q/2)$th power of $\sigma_u$ is 
a non-trivial element with fixed points).  

On the other hand, we can multiply $\sigma_u$ by $\lambda_{0,t}$, which 
interchanges vertices $(0,v)$ and $(t,v)$, for all $v \in Z_n$, and find that 
$\sigma_{u}\lambda_{0,t}$ is a semi-regular element of order $q$, with 
$n/q$ cycles of length $q$.  (The vertex $(0,0)$ lies in a cycle of length $q = 2t$ 
consisting of all $(s',0)$ with $\pm j_{s'} = u^i$ for some $i$, while the vertex $(0,1)$ 
lies in a cycle of length $q$ consisting of all $(s,u^i)$ such that $0 \le s < t$ and $\pm j_{s} = u^i$ 
for even $i$, and all $(s+t,u^i)$ such that $0 \le s < t$ and $\pm j_{s} = u^i$ for odd $i$;  
the cycles containing the other vertices $(s',1)$ have a similar form.)

It follows that the subgroup generated by $\rho$ and $\sigma_{u}\lambda_{0,t}$ 
has order $nq = 2nt$, and is transitive on vertices, and hence is vertex-regular, 
so that $GI(n;[2]J)$ is a Cayley graph. 

When $W$ is non-trivial, the elements $\sigma_w$ for all $w$ in $W$ 
(or simply all $w$ from a generating set for $W$) induce a regular permutation 
group on the layers $L_s$ for which $\pm j_s \in W$, and it follows that 
the subgroup generated by $\rho$ and $\sigma_{u}\lambda_{0,t}$ and these 
$\sigma_w$ acts regularly on the vertices of $Y$, again making $GI(n;[2]J)$ 
a Cayley graph.  

Finally, for any even integer $k > 2$, we find that $GI(n;[k]J) = GI(n;[k/2][2]J)$ is 
a Cayley graph, by applying Lemma~\ref{lem:multipleCayley} with $[2]J$ in 
place of $J$, and $k/2$ in place of $k$. 
\end{proof} 

On the other hand, the same does not hold when $k$ is odd:

%%%%%%%%%%%%%
\begin{Lemma} 
\label{lem:oddnonCayley}
%%%%%%%%%%%%%
If $J$ is primitive and $GI(n;J)$ is vertex-transitive but %not edge-transitive and 
not a Cayley graph, then $GI(n;[k]J)$ is not a Cayley graph for any odd integer $k > 1$.
\end{Lemma}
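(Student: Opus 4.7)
First I would dispose of the degenerate case $(n, J) = (10, \{1, 2\})$: by the discussion following Theorem \ref{thm:classvt}, the graph $GI(10;[k]\{1,2\})$ is not vertex-transitive for any $k > 1$, so a fortiori not a Cayley graph. From here on I assume the other alternative of Theorem \ref{thm:primVT}, namely that $J \cup -J$ is a multiplicative subgroup of $\bZ_n^{\,*}$ of order $2t$, where $t = |J|$, and in particular every $j \in J$ is a unit mod $n$.

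Set $Y = GI(n;[k]J)$ and let $N \trianglelefteq \Aut(Y)$ be the subgroup $\prod_{j\in J}T_j \cong (S_k)^t$ generated by the mixing automorphisms $\lambda_{i,s_1,s_2}$ of Proposition \ref{propn:mix_layers}. Its orbits on $V(Y)$ are the fibres of the natural projection $\eta:V(Y)\to V(X)$ that identifies the $k$ duplicate copies of each layer, so there are $nt$ such orbits and each has size $k$. Because $N$ is normal, $\Aut(Y)/N$ acts on these orbits, and a direct check on generators shows $\Aut(Y)/N$ is isomorphic to the subgroup $H \le \Aut(X)$ of automorphisms of $X$ respecting the fundamental edge-partition; here $H = \langle \rho, \tau, \sigma_a : a \in J \cup -J \rangle$ has order $n\,|J \cup -J| = 2nt$ (no cycle-mixing factors appear, because $J$ is primitive).

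Now suppose for contradiction that $G \le \Aut(Y)$ is vertex-regular, so $|G| = nkt$. The plan is to squeeze $|G \cap N|$ into the interval $[k/2,\,k]$ by divisibility, and then descend to $X$. On one side, $G$ is semi-regular on $V(Y)$, hence so is $G \cap N$ on each $N$-orbit of size $k$, forcing $|G \cap N|$ to divide $k$. On the other side, $G/(G \cap N)$ embeds into $\Aut(Y)/N \cong H$ and is transitive on the $nt$ orbits (transitivity being inherited from $G$), so $|G/(G \cap N)| \le |H| = 2nt$, which yields $|G \cap N| \ge k/2$.

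For odd $k \ge 3$ the only divisor of $k$ in $[k/2,\,k]$ is $k$ itself: any proper divisor of $k$ is at most $k/p \le k/3 < k/2$, where $p \ge 3$ is the smallest prime factor of $k$. Hence $|G \cap N| = k$ and $|G/(G \cap N)| = nt = |V(X)|$, so $G/(G \cap N)$ is a transitive subgroup of $H \le \Aut(X)$ whose order equals $|V(X)|$, i.e., a vertex-regular subgroup of $\Aut(X)$. This makes $X$ a Cayley graph, contradicting the hypothesis. The decisive step --- and the reason the parity of $k$ matters --- is the divisor pinch above: for even $k$ the value $|G \cap N| = k/2$ is permitted and is precisely what the construction in Lemma \ref{lem:doublenonCayley} exploits, whereas for odd $k$ that option vanishes and the argument closes.
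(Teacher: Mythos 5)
Your proof is correct, but it takes a genuinely different route from the paper's. The paper argues by exhibiting a concrete element whose semi-regularity fails: since $X$ is not a Cayley graph, Proposition~\ref{thm:primCayley} forces $J\cup -J$ to contain an element $b$ of multiplicative order $4$ with $b^2=-1$; a putative vertex-regular subgroup of $\Aut(Y)$ must then contain some $\theta = w\sigma_{\pm b}$ with $w$ in the cycle-mixing subgroup $N$, and $\theta^2 = w'\tau$ is shown to have cycles of two different lengths $\ell$ and $2\ell$ on explicit vertex sets --- the oddness of $k$ entering through the odd cycle length $\ell \mid k$ of $w'$ on the $k$ copies of the layer $L_0$. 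You replace all of this element-chasing with a counting argument on the normal subgroup $N\cong (S_k)^t$: semi-regularity of $G$ pins $|G\cap N|$ to a divisor of $k$, the index $|\Aut(Y):N|=2nt$ pins it to be at least $k/2$, and for odd $k$ these force $|G\cap N|=k$, whence $GN/N$ is a regular subgroup of $\Aut(X)$ and $X$ would be Cayley. This is more conceptual, uses the non-Cayley hypothesis only once at the very end, and makes transparent exactly where the parity of $k$ matters (the divisor $k/2$ is admissible precisely when $k$ is even, which is what Lemma~\ref{lem:doublenonCayley} exploits). The two points that need to be stated carefully are (i) that the kernel of the action of $\Aut(Y)$ on the $N$-orbits is exactly $N$, so that $GN/N$ acts faithfully on the orbit set identified with $V(X)$, and (ii) that this action lands inside $\Aut(X)$ as the subgroup $H=\langle\rho,\tau,\sigma_a\rangle$; both follow from the structure of $\Aut(Y)$ established in sub-section~\ref{subs:general}, which applies because $Y$ has $kt\ge 6$ layers (note $t\ge 2$, since $t=1$ would make $X$ a cycle and hence Cayley) and is therefore not edge-transitive by Theorem~\ref{theorem:skewautom}.
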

\begin{proof}
Assume the contrary, so that $X = GI(n;J)$ is vertex-transitive 
%but not edge-transitive 
and not a Cayley graph, but $Y = GI(n;[k]J)$ is a Cayley graph, for some odd $k$. 

Then we know that $X \ne GI(10;1,2)$, since $Y$ is vertex-transitive, 
and so $J \cup -J$ is a subgroup of $\bZ_n^{\,*}$, by Theorem~\ref{thm:primVT}. 
On the other hand, since $X$ is not a Cayley graph, Proposition~\ref{thm:primCayley} 
tells us that $J \cup -J$ has no subgroup of index $2$ that 
excludes $-1$,  and therefore $J \cup -J$ contains an element $u$ of (multiplicative) 
order $4m$ for some $m$, with $u^{2m} = -1$. 
Also by Theorem~\ref{theorem:skewautom}, we know that $Y$ is not edge-transitive, 
and so $\Aut(Y)$ preserves the fundamental edge-partition of $Y$, and hence every 
subgroup of $\Aut(Y)$ permutes the layers of $Y$ among themselves. 

Now let $R$ be a vertex-regular subgroup of $\Aut(Y)$, and take $b = u^m$, which has 
order $4$, with $b^2 = -1$ in $\bZ_n^{\,*}$.  
Next, choose $i$ such that $j_i = \pm b$ (noting that such an $i$ must exist because 
$b$ lies in the subgroup $J \cup -J$).  
Then by vertex-transitivity of $R$, there exists some automorphism $\theta$ of $Y$ taking 
the vertex $(0,0)$ to the vertex $(i,0)$. Moreover, by our knowledge of the structure 
of $\Aut(Y)$ from Section~\ref{automgps} and the fact that all of the automorphisms 
$\lambda_{s_1,s_2}$ and $\sigma_a$ preserve the spoke $S_0$, it follows that 
$\theta = w\sigma_b$ or $w\sigma_{-b}$ for some $w$ in the subgroup $N$ generated 
by the set of all of the automorphisms $\lambda_{s_1,s_2}$. 

Since $R$ acts regularly on vertices, every non-trivial automorphism in $R$ has to be 
semi-regular.  In particular, $\theta$ is semi-regular, as is its square 
\\[-14pt] 
\begin{center} 
$\theta^2 = (w\sigma_{\pm b})^2 = w(\sigma_{\pm b}w\sigma_{\pm b}^{\ -1}) \sigma_{(\pm b)^2} 
= w'\sigma_{-1} = w'\tau$,  
\\[-10pt] 
\end{center} 
where $w' = w(\sigma_{\pm b}w\sigma_{\pm b}^{\ -1}) \in N$.
Both $w'$ and $\tau$ preserve the spoke $S_0$, and therefore so does $w'\tau$, 
and thus $w'\tau$ acts semi-regularly on $S_0$.  But also $\tau$ fixes every 
vertex $(s,0)$ of $S_0$, and so $w'$ itself acts semi-regularly on $S_0$.  
Furthermore, since every element of $N$ preserves the set $\{L_0,L_t,\dots,L_{(k-1)t}\}$ 
of $k$ layers corresponding to the occurrences of $1$ in  $J$, 
it follows that both $w'\tau$ and $w'$ act semi-regularly on the set $K = \{(rt,0) : 0 \le r < k\}$. 

In particular, cycles of the permutation induced by $w'$ on $K = \{(rt,0) : 0 \le r < k\}$ 
must all have the same length, say $\ell$.  Note that $w'$ is non-trivial, for otherwise 
$w'\tau = \tau$, which is not semi-regular on vertices (because it has fixed points), 
and therefore $\ell > 1$.  But also $\ell$ must divide $k$, so $\ell$ is odd. 

Now consider any $\ell$-cycle of $w'$ on $K$,  say $((s_1,0),(s_2,0), \dots, (s_\ell,0))$. 
Because $\tau$ fixes every vertex of $K$, this is also a cycle of $w'\tau$, 
and hence all cycles of $w'\tau$ have length $\ell$. 
Also by definition of the elements generating $N$ 
(as defined in Proposition~\ref{propn:mix_layers}), 
we know that $((s_1,1),(s_1,1), \dots, (s_\ell,1))$ must be a cycle of $w'$. 
But now the cycle of $w'\tau$ containing the vertex $(s_1,1)$ is 
\\[-14pt] 
\begin{center} 
$((s_1,1),(s_2,-1),(s_3,1), \dots,(s_{\ell-1},-1),(s_{\ell},1),
(s_1,-1),(s_2,1),(s_3,-1), \dots,(s_{\ell},-1)),$
\\[-10pt] 
\end{center} 
which has length $2k$, and this contradicts the fact that $w'\tau$ is semi-regular.
\end{proof} 

%Hence if $X=GI(n;J)$ is connected and vertex-transitive but not $GI(10;1,2)$, 
%and $X$ is isomorphic to $GI(n;[k_0]J_0)$ for some primitive subset $J_0$ of $\bZ_n$ 
%such that $GI(n;J_0)$ is vertex-transitive, 
%then $GI(n;J)$ is a Cayley graph if and only if either  $GI(n;J_0)$ is a Cayley graph, 
%or $k_0$ is even.  

Putting together Proposition~\ref{thm:primCayley} and Lemmas~\ref{lem:multipleCayley} 
and~\ref{lem:doublenonCayley}, we have the following: 

%%%%%%%%%
\begin{Theorem} 
\label{thm:nonprimCayley} 
%%%%%%%%%
If $X=GI(n;J)$ is connected, then $X$ is a Cayley graph if and only if 
\\[+2pt] 
{\em (a)} $J$ is primitive, and $J \cup -J$ is a multiplicative subgroup of $\bZ_n^{\,*},$ 
with a subgroup of index $2$ that does not contain $-1$,  or 
\\[+2pt] 
{\em (b)} %$J$ is not primitive, but 
$X = GI(n;J)$ is isomorphic to $GI(n;[k_0]J_0)$ 
for some primitive subset $J_0$ of $\bZ_n$ and some integer $k_0 > 1$, 
such that either $GI(n;J_0)$ is a Cayley graph,  
or $k_0$ is even and $GI(n;J_0)$ is vertex-transitive but is not the dodecahedral graph $GI(10;1,2)$. 
\end{Theorem}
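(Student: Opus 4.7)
The plan is to assemble this characterisation from Proposition~\ref{thm:primCayley} together with Lemmas~\ref{lem:multipleCayley},~\ref{lem:doublenonCayley}, and~\ref{lem:oddnonCayley}, using the classification of connected vertex-transitive $GI$-graphs from Theorems~\ref{thm:primVT} and~\ref{thm:classvt}.

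For the sufficiency direction, if (a) holds, Proposition~\ref{thm:primCayley} immediately produces a vertex-regular subgroup of $\Aut(X)$, so $X$ is Cayley. If (b) holds, I would split according to which clause is satisfied. When $GI(n;J_0)$ is itself Cayley, I would invoke Lemma~\ref{lem:multipleCayley} to lift a vertex-regular subgroup from $GI(n;J_0)$ to $X \cong GI(n;[k_0]J_0)$, after first noting that the vertex-regular subgroup produced by Proposition~\ref{thm:primCayley} (and, where needed, by the known edge-transitive Cayley examples $G(4,1)$, $G(8,3)$, $G(12,5)$, $G(24,5)$) can be arranged to contain the standard rotation $\rho_{J_0}$, since it is built by adjoining certain $\sigma_a$ to $\langle \rho \rangle$. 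When instead $k_0$ is even and $GI(n;J_0)$ is vertex-transitive and distinct from the dodecahedron $GI(10;1,2)$, I would apply Lemma~\ref{lem:doublenonCayley} directly when $k_0 = 2$, and combine it with Lemma~\ref{lem:multipleCayley} applied to $[2]J_0$ in place of $J$ when $k_0 > 2$, to conclude that $X$ is Cayley.

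For the necessity direction, I use that any Cayley graph is vertex-transitive, so by Theorem~\ref{thm:classvt} I may write $X \cong GI(n;[k_0]J_0)$ for some primitive subset $J_0$ of $\bZ_n$ with $GI(n;J_0)$ vertex-transitive, and some integer $k_0 \ge 1$. If $k_0 = 1$, then $J = J_0$ is primitive, and Theorem~\ref{thm:primVT} says either $J \cup -J$ is a multiplicative subgroup of $\bZ_n^{\,*}$ or $X = GI(10;1,2)$; since the dodecahedron is not a Cayley graph, we are in the first case, and then Proposition~\ref{thm:primCayley} forces $J \cup -J$ to admit an index-$2$ subgroup avoiding $-1$, which is precisely (a). If $k_0 > 1$ is odd, the contrapositive of Lemma~\ref{lem:oddnonCayley} forces $GI(n;J_0)$ to be a Cayley graph, giving the first clause of (b). If $k_0 > 1$ is even, $GI(n;J_0)$ is vertex-transitive by Theorem~\ref{thm:classvt}, and $GI(n;J_0) \ne GI(10;1,2)$, for otherwise $X \cong GI(10;[k_0]\{1,2\})$ would fail to be vertex-transitive (as observed at the end of Section~\ref{vertextrans}), contradicting that $X$ is Cayley; this yields the second clause of (b).

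The main technical care I anticipate is the bookkeeping in the sufficiency direction when (b) holds with $k_0 > 2$ even, specifically verifying the hypothesis of Lemma~\ref{lem:multipleCayley} that the vertex-regular subgroup of $GI(n;[2]J_0)$ produced by Lemma~\ref{lem:doublenonCayley} contain the standard rotation. Inspecting the construction in the proof of Lemma~\ref{lem:doublenonCayley}, the regular subgroup is generated by $\rho$, by $\sigma_u\lambda_{0,t}$, and by certain $\sigma_w$, so it does contain $\rho$, and the lift goes through; beyond this, the proof is essentially an assembly of results already in hand.
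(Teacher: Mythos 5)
Your proposal is correct and follows essentially the same route as the paper, which simply assembles Proposition~\ref{thm:primCayley} with Lemmas~\ref{lem:multipleCayley}, \ref{lem:doublenonCayley} and~\ref{lem:oddnonCayley} via the classification in Theorems~\ref{thm:primVT} and~\ref{thm:classvt}. If anything, your write-up is more careful than the paper's one-line proof: you make explicit the use of Lemma~\ref{lem:oddnonCayley} in the necessity direction and check that the regular subgroups produced by the earlier results contain the standard rotation, both of which the paper leaves implicit.
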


%%%%%%%%%%%%%%
\section{Additional remarks}
\label{conclusion} 
%%%%%%%%%%%%%%

The family of $GI$-graphs forms a natural generalisation of the Petersen graph.
Our initial studies of $GI$-graphs have shown that this family
is indeed very interesting  and deserves further consideration.
These graphs are also related to circulant graphs \cite{Morris}. Through that 
relationship, we were able to solve the puzzle of what appeared to be 
unstructured automorphisms of $GI$-graphs, and this enabled us to find their
automorphism groups and classify those that are vertex-transitive or Cayley graphs.

Let us mention also the problem of unit-distance drawings of $GI$-graphs.
A graph is a \emph{unit-distance graph} if it can be drawn in the plane 
such that  all of its edges have the same length.
In \cite{HPZ1}, it was shown that all $I$-graphs are unit-distance graphs.
On the other hand, obviously no $GI$-graph with four or more layers can be a unit-distance
graph, since it contains a $K_4$ as a subgraph, which itself is not a unit-distance 
graph. Hence the only open case of interest is the sub-class of $GI$-graphs 
having three layers.

For each $k \in \Z_n$, the graph $GI(n; k,k,k)$ is a cartesian product of two cycles 
and is therefore a unit-distance graph by \cite[Theorem 3.4]{HorvatPisanski2}. 
We know of only one other connected example that is a unit-distance graph, 
and it is remarkable.  

This is the graph $GI(7;1,2,3)$, which is shown 
in Figure \ref {fig:UDGIgraphs}. The vertices can be drawn
equidistantly on three concentric circles with radii 
$$R_1=\frac{1}{2\sin(\pi/7)}, \ \ \ 
  R_2=\frac{1}{2\sin(2\pi/7)}, \ \ \ \mbox{and} \ \ \
  R_3=\frac{1}{2\sin(3\pi/7)},
$$  
and the two smaller circles rotated through angles of $\pi/3$ and
$-\pi/3$  with respect to the largest circle.
One can then verify that all edges have the same length $1$. 

\begin{figure}[htb]
\centering
  \includegraphics[width=0.35\textwidth]{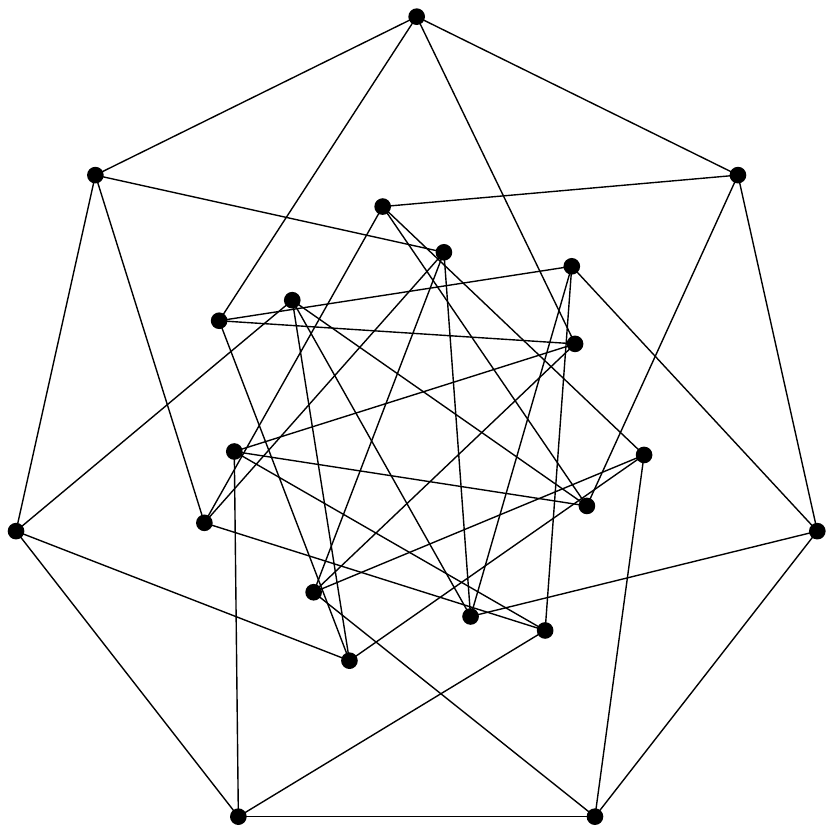}
\caption{The graph  $GI(7;1,2,3)$ as a unit-distance graph.} 
\label{fig:UDGIgraphs}
\end{figure}

The graph $GI(7;1,2,3)$ is a Cayley graph for the non-abelian group 
of order $21$, namely $\bZ_7 \rtimes_{2\,} \bZ_3$, which has presentation 
$\langle \, a,b \, | \ a^7 = b^3 = 1, \, b^{-1}ab = a^2 \, \rangle$. 
Its girth is 3 but it contains no cycles of length 4. 
This means that its Kronecker cover (see \cite{ImrichPisanski}) has girth 6 and is
a Levi graph \cite{Coxeter} of a self-polar, point- and line-transitive but not 
flag-transitive combinatorial $(21_4)$-configuration.
The resulting configuration is different from the configuration of Gr\"unbaum 
and Rigby \cite{GR}, since the latter configuration is flag-transitive but 
the one obtained from $GI(7;1,2,3)$ is not.

\section*{Acknowledgements}
This work has been financed by ARRS 
within the EUROCORES Programme EUROGIGA (project GReGAS, N1--0011) 
of the European Science Foundation. 
It was also supported partially by the ARRS (via grant P1-0294), 
and the N.Z.~Marsden Fund (via grant UOA1015).

%%%%%%%%%%%%%%

%----------------------------------------------------------------------------------
%----------------------------------------------------------------------------------

\end{document}